\theoremstyle{definition}
\newtheorem{remark}{Remark}[section]
\newtheorem{esempio}{Example}[section]
\theoremstyle{plain}
\newtheorem{definizione}{Definition}[section]
\newtheorem{teorema}{Theorem}[section]
\newtheorem{proposizione}{Proposition}[section]
\newtheorem{lemma}{Lemma}[section]
\newtheorem{corollario}{Corollary}[section]
\newcommand{\numberset}{\mathbb}
\newcommand{\R}{\numberset{R}}
\newcommand{\N}{\numberset{N}}
\newcommand{\Z}{\numberset{Z}}
\DeclarePairedDelimiter{\abs}{\lvert}{\rvert}
\DeclarePairedDelimiter{\norma}{\lVert}{\rVert}
\let\oldabs\abs
\def\abs{\@ifstar{\oldabs}{\oldabs*}}
\let\oldnorma\norma
\def\norma{\@ifstar{\oldnorma}{\oldnorma*}}
\title{\bf{The generalized recurrent set, \\ explosions and Lyapunov functions}}
\author{\hspace{-1cm}{OLGA BERNARDI${\,
}^{1}$ \quad ANNA FLORIO${\,
}^{2}$  \quad JIM WISEMAN${\,
}^{3}$                }
\vspace{0.3cm}
\\
\hspace{-1cm}${\ }^{1}$ Dipartimento di Matematica ``Tullio Levi-Civita'',
Universit\`a di Padova,  Italy
\\
%\hspace{-1cm}Via Trieste, 63 - 35121 Padova, Italy
%\\ 
\\
\hspace{-1cm}${\ }^{2}$ Laboratoire de Mathématiques d'Avignon, Avignon Université, France \\
%\hspace{-1cm} 84018 Avignon, France 
%\\
\\
\hspace{-1cm}${\ }^{3}$ Department of Mathematics, Agnes Scott College, Decatur, Georgia, USA \\}
\date{ }
\begin{document}
\maketitle

%\date{ }

\begin{abstract} 
\noindent We consider explosions in the generalized recurrent set for homeomorphisms on a compact metric space. We provide multiple examples to show that such explosions can occur, in contrast to the case for the chain recurrent set. We give  sufficient conditions  to avoid explosions and  discuss their necessity. Moreover, we explain the relations between explosions and  cycles for the generalized recurrent set. In particular, for a compact topological manifold with dimension greater or equal $2$, we characterize explosion phenomena in terms of existence of cycles. We apply our results to give sufficient conditions for stability, under $\mathscr{C}^0$ perturbations, of the property of admitting a continuous Lyapunov function which is not a first integral. \\ 
\end{abstract}

%%% %%%
\section{Introduction}
Generalized recurrence was originally introduced for flows by Auslander in the Sixties \cite{AUS} by using continuous Lyapunov functions. Auslander defined the generalized recurrent set to be the union of those orbits along which all continuous Lyapunov functions are constant. In the same paper, he gave a characterization of this set in terms of the theory of prolongations. The generalized recurrent set was later extended to maps by Akin and Auslander (see \cite{A} and \cite{AA}). More recently Fathi and Pageault \cite{FP15} proved that, for a homeomorphism, the generalized recurrent set can be equivalently defined by using Easton's strong chain recurrence \cite{EA78}. \\
~\newline
The present paper is concerned with the behaviour, under continuous perturbations of the map, of the generalized recurrent set for homeomorphisms. In particular, we analyze the phenomenon of explosions, which are  discontinuous jumps in the size of the generalized recurrent set.  Moreover, we apply our results to give  sufficient conditions  to assure the persistence under continuous perturbations of a continuous Lyapunov function which is not a first integral (that is a continuous strict Lyapunov function). \\
~\newline
Throughout the paper, $(X,d)$ is a compact metric space. We denote by $Hom(X)$ the space of homeomorphisms of $X$ equipped with the uniform topology induced by the metric: 
$$d_{\mathscr{C}^0}(f,g) = \max_{x \in X} d(f(x),g(x)).$$
Let $\mathcal{GR}(f)$ be the generalized recurrent set of $f \in Hom(X)$ (the rigorous definition will be given in Section \ref{pre}). \\
~\newline
{\textit{Assume that $\mathcal{GR}(f) \ne X$. We say $f$ does not permit $\mathcal{GR}$-explosions if for any open neighborhood $U$ of $\mathcal{GR}(f)$ in $X$ there exists a neighborhood $V$ of $f$ in $Hom(X)$ such that if $g \in V$ then $\mathcal{GR}(g) \subset U$. \\
We say $f$ does not permit $\mathcal{GR}$-full explosions if there exists a neighborhood $V$ of $f$ in $Hom(X)$ such that if $g \in V$ then $\mathcal{GR}(g) \ne X$. }}\\
~\newline
Explosions have been studied  for both the non-wandering set $\mathcal{NW}(f)$ 
and the chain recurrent set $\mathcal{CR}(f)$
(see e.g.\ \cite{SS} and \cite{BF85} respectively).  We introduce the notion of $\mathcal{GR}$-full explosion for the application to continuous strict Lyapunov functions. After some preliminary results and definitions, in Section \ref{pre} we observe that $\mathcal{GR}$-(full) explosions in general can occur (Example \ref{cantor-fissi}). \\
~\newline
Section \ref{tre} gives sufficient conditions to avoid $\mathcal{GR}$-explosions (Corollary \ref{COR UNO} and Proposition \ref{top stab imply no GR exp}) and $\mathcal{GR}$-full explosions (Proposition \ref{anna}). These results can be summarized as follows. \\
~\newline
{\textit{Assume that $\mathcal{GR}(f) \ne X$. If $(i)$ $\mathcal{GR}(f) = \mathcal{CR}(f)$ or $(ii)$ $f$ is topologically stable then $f$ does not permit $\mathcal{GR}$-explosions. If $\mathcal{CR}(f) \ne X$ then $f$ does not permit $\mathcal{GR}$-full explosions.}} \\
~\newline
The core of Section \ref{tre} concerns the necessity of these conditions. In particular, in Examples \ref{wiseman}, \ref{attrattore-repulsore}  and \ref{example JW 2 revised} respectively, we show that the converses of the previous results are in general false on compact metric spaces. \\
~\newline
The goal of Section \ref{quattro} is to explain the relations between explosions and  cycles for the generalized recurrent set. The notion of cycle will be rigorously recalled in Definition \ref{cicli}. Since we need to apply the $\mathscr{C}^0$ closing lemma, the ambient space is a compact, topological manifold $M$ with $dim(M) \ge 2$. \\
~\newline
\textit{Assume that $\mathcal{GR}(f) \ne M$. $f$ does not permit ${\mathcal{GR}}$-explosions if and only if there exists a decomposition of ${\mathcal{GR}}(f)$ with no cycles.} \\
~\newline
The above theorem generalizes the corresponding result for the non-wandering set. More precisely, the fact that the existence of a decomposition of $\mathcal{NW}(f)$ without cycles prohibits $\mathcal{NW}$-explosions is due to Pugh and Shub (see Theorem 6.1 in \cite{PS} for flows and Theorem 5.6 in \cite{WEN} for homeomorphisms); the converse has been proved by Shub and Smale (see Lemma 2 in \cite{SS}). We postpone the proofs of both implications to Appendixes \ref{APP1} and \ref{APP}. \\
We remark that, in the proof of this theorem, we do not use the full definition of the generalized recurrent set, but only the fact that $\mathcal{GR}(f)$ is a compact, invariant set, which contains $\mathcal{NW}(f)$ and is contained in $\mathcal{CR}(f).$ Consequently, with the same proof, we obtain the corresponding results both for the strong chain recurrent set $\mathcal{SCR}_d(f)$ and $\mathcal{CR}(f)$, see Theorem \ref{teo 4} and Remark \ref{rem CR}. \\
~\newline
In Section \ref{cinque} we apply the results of Sections \ref{tre} and \ref{quattro} to obtain sufficient conditions for an affirmative answer to the question: \\
~\newline
\textit{For a given homeomorphism, is the property of admitting a continuous strict Lyapunov function stable under $\mathscr{C}^0$ perturbations?} \\
~\newline
We refer to Propositions \ref{Lyapunov stability} and \ref{per varieta}. Finally, in Proposition \ref{Lyapunov genericity}, we remark that on a smooth, compact manifold the property of admitting such a function is generic.

%%% %%%
\section{Preliminaries} \label{pre}
\noindent In this section we recall the notions of chain recurrent, strong chain recurrent and generalized recurrent point for a fixed $f \in Hom(X)$. \\
\indent Given $x,y\in X$ and $\varepsilon>0$, an \emph{$\varepsilon$-chain} from $x$ to $y$ is a finite sequence $(x_i)_{i=1}^{n}\subset X$ such that $x_1=x$ and, setting, $x_{n+1}=y$, we have
\begin{equation} \label{prima}
d(f(x_i),x_{i+1}) < \varepsilon \qquad \forall i=1,\ldots,n.
\end{equation}
A point $x\in X$ is said to be \emph{chain recurrent} if for all $\varepsilon>0$ there exists an $\varepsilon$-chain from $x$ to $x$. The set of chain recurrent points is denoted by $\mathcal{CR}(f)$.
\noindent Since we assumed $X$ to be compact, chain recurrence depends only on the topology, not on the choice of the metric (see for example  \cite{DeV}[Theorem 4.4.5] and \cite{FR}[Section 1]). \\
 \indent Given $x,y\in X$ and $\varepsilon>0$, a \emph{strong $\varepsilon$-chain} from $x$ to $y$ is a finite sequence $(x_i)_{i=1}^{n}\subset X$ such that $x_1=x$ and, setting $x_{n+1}=y$, we have
\begin{equation} \label{seconda}
\sum_{i=1}^n d(f(x_i),x_{i+1}) < \varepsilon.
\end{equation}
A point $x\in X$ is said to be \emph{strong chain recurrent} if for all $\varepsilon>0$ there exists a strong $\varepsilon$-chain from $x$ to $x$. The set of strong chain recurrent points is denoted by $\mathcal{SCR}_d(f)$.
\noindent In general, strong chain recurrence depends on the choice of the metric; see for example \cite{YO}[Example 3.1] and \cite{WI}[Example 2.6]. \\
\indent A way to eliminate the dependence on the metric in $\mathcal{SCR}_d(f)$ is taking the intersection over all metrics. We then obtain the generalized recurrent set
\begin{equation} \label{terza}
\mathcal{GR}(f) := \bigcap_{d} \mathcal{SCR}_d(f)
\end{equation}
where the intersection is over all metrics compatible with the topology of $X$. 
\\
\noindent The sets $\mathcal{GR}(f)$, $\mathcal{SCR}_d(f)$ and $\mathcal{CR}(f)$ are all closed and invariant (see respectively \cite{AA}[Page 52], \cite{FP15}[Page 1193] and \cite{FR}[Section 1]). Moreover, in general $\mathcal{NW}(f) \subset \mathcal{GR}(f) \subset \mathcal{SCR}_d(f) \subset \mathcal{CR}(f)$, where $\mathcal{NW}(f)$ denotes the non-wandering set of $f$, and all inclusions can be strict. We refer to \cite{WI}[Example 2.9] for an exhaustive treatment of these inclusions. \\
\indent The dynamical relevance of the generalized recurrent set relies on its relations with continuous Lyapunov functions. A continuous function $u:X\rightarrow\R$ is a Lyapunov function for $f$ if $u(f(x))\leq u(x)$ for every $x\in X$. Given a Lyapunov function $u:X \to \R$ for $f$, the corresponding neutral set is given by
$$\mathcal{N}(u) = \{x \in X: \ u(f(x)) = u(x)\}.$$
We refer to \cite{FP15}[Theorem 3.1] for the proof of the next result.
\begin{teorema} \label{generalized}
%The generalized recurrent set $\mathcal{GR}(f)$ coincides with
$$\mathcal{GR}(f) = \bigcap_{u \in \mathscr{L}(f)} \mathcal{N}(u),$$
where $\mathscr{L}(f)$ is the set of continuous Lyapunov functions for $f$. Moreover, there exists a continuous Lyapunov function for $f$ such that $\mathcal{N}(u) = \mathcal{GR}(f)$.   
\end{teorema}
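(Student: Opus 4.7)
The plan is to prove the two inclusions $\mathcal{GR}(f) \subseteq \bigcap_{u} \mathcal{N}(u)$ and $\bigcap_{u} \mathcal{N}(u) \subseteq \mathcal{GR}(f)$ separately, and then produce a single Lyapunov function realizing the intersection by a standard countable-combination trick.

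For the inclusion $\mathcal{GR}(f) \subseteq \bigcap_{u} \mathcal{N}(u)$, fix $x \in \mathcal{GR}(f)$ and $u \in \mathscr{L}(f)$. The idea is to change metric to let $u$ control jumps. Consider $d'(a,b) := d(a,b) + |u(a) - u(b)|$, which is compatible with the topology since $u$ is continuous on the compact space $X$. Since $\mathcal{GR}(f) \subseteq \mathcal{SCR}_{d'}(f)$, for every $\varepsilon > 0$ there is a strong $\varepsilon$-chain $x = x_1, \ldots, x_{n+1} = x$ in $d'$. Telescoping,
$$0 = u(x_{n+1}) - u(x_1) = \sum_{i=1}^n \bigl(u(x_i) - u(f(x_i))\bigr)^{-} + \sum_{i=1}^n \bigl(u(f(x_i)) - u(x_{i+1})\bigr),$$
where the first summands are nonpositive (Lyapunov property) and the second sum is bounded in absolute value by $\sum |u(f(x_i)) - u(x_{i+1})| \le \sum d'(f(x_i), x_{i+1}) < \varepsilon$. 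Hence the total nonnegative drop $\sum \bigl(u(x_i) - u(f(x_i))\bigr) < \varepsilon$, so in particular $u(x) - u(f(x)) < \varepsilon$ for every $\varepsilon > 0$, giving $x \in \mathcal{N}(u)$.

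For the reverse inclusion, argue by contrapositive: if $x \notin \mathcal{GR}(f)$, then there is a compatible metric $d$ and $\delta > 0$ such that every strong chain from $x$ to $x$ has total cost at least $\delta$. Define the Mañé-type potential $L_d(x,y) := \inf \sum_{i=1}^n d(f(x_i), x_{i+1})$ over finite chains from $x$ to $y$. Then $L_d(x, x) \ge \delta > 0$, $L_d(x, f(x)) = 0$, and the triangle-like inequality $L_d(x, f(y)) \le L_d(x, y) + L_d(y, f(y)) = L_d(x, y)$ shows that $v(y) := L_d(x, y)$ is subinvariant under $f$. The candidate Lyapunov function is $v$; the essential obstacle is that $v$ is only upper semicontinuous in general. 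To remedy this, pass to the $d$-Lipschitz regularization $u(y) := \inf_{z \in X} \bigl[L_d(x,z) + d(z,y)\bigr]$, verify it is still a Lyapunov function (the inf of Lyapunov functions shifted by distances remains Lyapunov, using that $d(f(z), f(y))$ relates to $L_d$), and check that $u(x) \ge \delta' > 0 = u(f(x))$ for some $\delta' > 0$ depending on $\delta$ and the modulus of continuity. This delivers a continuous Lyapunov function witnessing $x \notin \mathcal{N}(u)$.

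Finally, to extract a single $u$ with $\mathcal{N}(u) = \mathcal{GR}(f)$, use that $C(X,\R)$ is separable. Pick a countable family $\{u_n\}_{n \in \N} \subset \mathscr{L}(f)$ that suffices for the second inclusion, for instance by taking, for each point in a countable dense subset of $X \mysetminus \mathcal{GR}(f)$, a Lyapunov function strictly decreasing there. Setting
$$u := \sum_{n=1}^{\infty} \frac{1}{2^n (1 + \|u_n\|_\infty)}\, u_n,$$
the uniform convergence gives $u \in C(X, \R) \cap \mathscr{L}(f)$, and the neutral set computes as $\mathcal{N}(u) = \bigcap_n \mathcal{N}(u_n) = \mathcal{GR}(f)$ since each summand is nonincreasing along orbits. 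I expect the main difficulty to lie in the second inclusion: ensuring that the regularization $u$ is simultaneously continuous, Lyapunov, and still strictly drops at $x$; every naive choice stumbles on one of these three properties.
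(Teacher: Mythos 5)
The paper itself does not prove this statement; it cites Theorem 3.1 of Fathi--Pageault [FP15], and your argument follows essentially the same route as theirs (a change of metric that absorbs a given Lyapunov function for one inclusion, Ma\~n\'e-type potentials $L_d$ for the other). Your first inclusion is correct: the telescoping identity is written with both sums carrying the wrong sign, but since the left-hand side is $0$ this is harmless, and the conclusion $\sum_i\bigl(u(x_i)-u(f(x_i))\bigr)<\varepsilon$ with nonnegative summands does give $x\in\mathcal{N}(u)$. In the second inclusion your worry about regularity is misplaced, in a way that helps you: $y\mapsto L_d(x,y)$ is $1$-Lipschitz, because a chain from $x$ to $y$ becomes a chain from $x$ to $y'$ by replacing only its terminal point, which changes the cost by at most $d(y,y')$. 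Hence $v=L_d(x,\cdot)$ is already continuous, your inf-convolution returns $v$ unchanged, and the three properties you defer (``verify\dots check\dots'') are immediate: $v(f(y))\le v(y)$ by concatenating chains, $v(f(x))=0$ via the one-step chain, and $v(x)\ge\delta$ by the choice of $d$. You should close this out explicitly rather than leave it open, but it is not a gap.

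The genuine gap is in the last step. Taking a countable dense subset $\{p_k\}$ of $X\setminus\mathcal{GR}(f)$ and, for each $k$, a Lyapunov function $u_k$ with $u_k(f(p_k))<u_k(p_k)$ does not give $\bigcap_k\mathcal{N}(u_k)=\mathcal{GR}(f)$: the open sets $O_k=\{z:\ u_k(f(z))<u_k(z)\}$ contain the dense set $\{p_k\}$, but their union need not contain all of $X\setminus\mathcal{GR}(f)$ (an open set can contain a dense subset of an open set $W$ without containing $W$). The correct selection is topological rather than metric: by the equality already proved, the open sets $O_u=\{z:\ u(f(z))<u(z)\}$, $u\in\mathscr{L}(f)$, cover the open set $X\setminus\mathcal{GR}(f)$, which is Lindel\"of (being second countable), so countably many $O_{u_n}$ already cover it; with that choice your weighted sum $u=\sum_n c_nu_n$ works verbatim, since $u(z)-u(f(z))$ is a uniformly convergent sum of nonnegative terms and vanishes iff every term does.
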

\noindent In order to describe the behavior of $\mathcal{GR}(f)$ under continuous perturbations of $f$, we introduce and discuss the notions of $\mathcal{GR}$-explosion and $\mathcal{GR}$-full explosion. \\
\indent We start by recalling the phenomenon of explosions for the generalized recurrent set, which are particular discontinuities of the function 
$$Hom(X) \ni f \mapsto \mathcal{GR}(f) \in \mathcal{P}(X),$$
where $\mathcal{P}(X)$ denotes the power set of $X$. 
\begin{definizione} \label{la prima} (No $\mathcal{GR}$-explosions) Let $f \in Hom(X)$ be such that $\mathcal{GR}(f) \ne X$. We say $f$ does not permit $\mathcal{GR}$-explosions if for any open neighborhood $U$ of $\mathcal{GR}(f)$ in $X$ there exists a neighborhood $V$ of $f$ in $Hom(X)$ such that if $g \in V$ then $\mathcal{GR}(g) \subset U$.
\end{definizione}
\noindent No $\mathcal{NW}$-explosions and no $\mathcal{CR}$-explosions are defined analogously (see for example \cite{SS}[Page 588] for $\mathcal{NW}(f)$ and \cite{BF85}[Page 323] for $\mathcal{CR}(f)$). 
We recall that $\mathcal{NW}$-explosions in general can occur; see \cite{palis}, \cite{NI71}[Section 6.3] and \cite{WEN}[Section 5.2]. This is not the case for $\mathcal{CR}$-explosions; see the following, which is Theorem F in \cite{BF85}.
\begin{teorema} \label{teo 1} Let $f \in Hom(X)$ be such that $\mathcal{CR}(f) \ne X$. Then $f$ does not permit $\mathcal{CR}$-explosions. 
\end{teorema}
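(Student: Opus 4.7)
The plan is to use Conley's fundamental theorem characterising the chain recurrent set via attractor--repeller pairs: for a homeomorphism of a compact metric space, $\mathcal{CR}(f) = \bigcap_{A}(A \cup A^*)$, where $A$ ranges over the attractors of $f$ and $A^*$ is the corresponding dual repeller. Equivalently, $x \notin \mathcal{CR}(f)$ if and only if some attractor--repeller pair $(A,A^*)$ misses $x$. Combined with the $\mathscr{C}^0$-stability of trapping regions, this will let me confine $\mathcal{CR}(g)$ inside $U$ for all $g$ sufficiently close to $f$.

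First, I set $K := X \setminus U$, a compact set disjoint from $\mathcal{CR}(f)$. For each $x \in K$ I invoke Conley's theorem to select an attractor $A_x$ with dual repeller $A_x^*$ such that $x \notin A_x \cup A_x^*$. Since both $A_x$ and $A_x^*$ admit fundamental systems of, respectively, positively and negatively invariant open neighborhoods, I can choose open sets $N_x \supset A_x$ and $N_x^* \supset A_x^*$ satisfying $f(\overline{N_x}) \subset \operatorname{int}(N_x)$ and $f^{-1}(\overline{N_x^*}) \subset \operatorname{int}(N_x^*)$, and small enough that $x$ admits an open neighborhood $W_x$ disjoint from $\overline{N_x} \cup \overline{N_x^*}$. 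By compactness of $K$, finitely many $W_{x_1},\dots,W_{x_m}$ cover $K$; let $N_i,N_i^*$ denote the associated regions.

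Next, since each inclusion $f(\overline{N_i}) \subset \operatorname{int}(N_i)$ is a strict containment of a compact set inside an open set, a small uniform perturbation of $f$ preserves it, and likewise for the repelling inclusions with $f^{-1}$. Intersecting the resulting finitely many open neighborhoods of $f$ in $Hom(X)$ produces the desired $V$. For any $g \in V$, the sets
\[
A_g^i := \bigcap_{n\ge 0} g^n(\overline{N_i}) \subset N_i \quad\text{and}\quad (A_g^i)^* := \bigcap_{n\ge 0} g^{-n}(\overline{N_i^*}) \subset N_i^*
\]
form an attractor--repeller pair for $g$. Conley's theorem applied to $g$ then gives $\mathcal{CR}(g) \subset A_g^i \cup (A_g^i)^*$, so $W_{x_i} \cap \mathcal{CR}(g) = \emptyset$ for every $i$; taking the union over $i$ yields $\mathcal{CR}(g) \cap K = \emptyset$, i.e.\ $\mathcal{CR}(g) \subset U$.

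The main conceptual step is the appeal to Conley's decomposition, which converts a global chain-recurrence condition into a localizable condition (a point lying outside some attractor--repeller pair). The remaining ingredients---openness of strict containments under the uniform topology and the compactness argument---are routine. The only mild subtlety is ensuring that the dual repellers genuinely admit analogous ``backward'' trapping regions so that stability of $(A_g^i)^*$ follows by the same argument applied to $g^{-1}$.
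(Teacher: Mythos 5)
Your strategy is genuinely different from the paper's: the paper proves the theorem by a bare-hands argument, splicing an $\varepsilon/3$-chain for $g_n$ from $y_n$ to itself into an $\varepsilon$-chain for $f$ from the limit point $y$ to itself, i.e.\ it establishes upper semicontinuity of $\mathcal{CR}$ directly from the definition of $\varepsilon$-chains. You instead route through Conley's attractor--repeller characterization together with the $\mathscr{C}^0$-stability of trapping regions. That route is viable and conceptually clean, but as written it has one real gap.

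The gap is the assertion that $A_g^i=\bigcap_{n\ge 0}g^n(\overline{N_i})$ and $R_g^i:=\bigcap_{n\ge 0}g^{-n}(\overline{N_i^*})$ ``form an attractor--repeller pair for $g$.'' An attractor--repeller pair is determined by the attractor alone: the repeller must be the dual, $(A_g^i)^*=X\setminus W^s_g(A_g^i)$, and Conley's inclusion $\mathcal{CR}(g)\subset A\cup A^*$ holds only for dual pairs. You build $A_g^i$ from $N_i$ and $R_g^i$ from an a priori unrelated repelling region $N_i^*$; the perturbed trapping inclusions alone do not force $R_g^i$ to be dual to $A_g^i$ (for arbitrary choices of a trapping region and a repelling region this is simply false). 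What must be proved is that the genuine dual repeller of $A_g^i$ is still contained in $N_i^*$, equivalently that every point of the compact set $C_i:=X\setminus(\mathrm{int}\,N_i\cup\mathrm{int}\,N_i^*)$ is attracted to $A_g^i$ under $g$. This does hold, but requires an extra compactness step: for $f$ one has $C_i\subset W^s(A_{x_i})$, so each $c\in C_i$ admits a neighborhood $O_c$ and a time $n_c$ with $f^{n_c}(O_c)\subset\mathrm{int}\,N_i$; a finite subcover yields a uniform bound on entry times, and any $g$ sufficiently $\mathscr{C}^0$-close to $f$ (closeness depending on that bound) still sends each $O_c$ into $\mathrm{int}\,N_i$ within the same number of steps, after which the trapping property takes over. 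Hence $C_i\subset W^s_g(A_g^i)$, the dual repeller of $A_g^i$ lies in $\mathrm{int}\,N_i^*$, and your conclusion $W_{x_i}\cap\mathcal{CR}(g)=\emptyset$ follows. The ``mild subtlety'' you flag at the end (backward trapping regions for the dual repeller) is not the issue; the issue is the duality of the two perturbed sets. With this step supplied the proof is correct, at the cost of invoking Conley's theorem where the paper's argument is elementary.
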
 
\noindent In \cite{BF85}, the proof of the previous result essentially uses a dynamical characterization of the points outside $\mathcal{CR}(f)$. We propose an alternative, direct  proof of this fact. \\
~\newline
\textit{Proof of Theorem \ref{teo 1}.} Argue by contradiction and suppose there are an open neighborhood $U$ of $\mathcal{CR}(f)$ in $X$, a sequence $(g_n)_{n \in \mathbb{N}} \in Hom(X)$ converging to $f$ in the uniform topology and a sequence of points $(y_n)_{n \in \mathbb{N}}$ such that $y_n \in \mathcal{CR}(g_n) \setminus U$. Since $X$ is compact and $U$ is open, we can assume that the sequence $(y_n)_{n\in\N}$ converges to $y\notin U$. In particular, $y$ does not belong to $\mathcal{CR}(f)$. \\
\noindent Let $\varepsilon > 0$ be fixed. By hypothesis, for any $n \in \mathbb{N}$ there exists an $\frac{\varepsilon}{3}$-chain $(x_1=y_n,x_2,\dots,x_m,x_{m+1}=y_n)$ for $g_n$ from $y_n$ to $y_n$. 
Corresponding to $\varepsilon > 0$, there exists $\bar{n} \in \N$ such that 
\begin{equation}\label{thmCRnoEX 1}
d_{\mathscr{C}^0}(f,g_n) < \dfrac{\varepsilon}{3} \qquad \forall n\geq\bar{n}.
\end{equation}
Moreover, by the uniform continuity of $f$, there exists $\delta\in(0,\frac{\varepsilon}{3})$ such that
\begin{equation}\label{thmCRnoEX 2}
d(z,w) < \delta\qquad\Rightarrow\qquad d(f(z),f(w)) < \dfrac{\varepsilon}{3}.
\end{equation}
Finally, let $\tilde{n}\in\N$ such that 
\begin{equation}\label{thmCRnoEX 3}
d(y,y_n) < \delta \qquad \forall n\geq\tilde{n}.
\end{equation}
We will show that, if $n \ge \max(\bar{n},\tilde{n})$ then the chain 
$$(y,x_2,\dots,x_m,y)$$ 
is an $\varepsilon$-chain for $f$ from $y$ to $y$. Indeed, thanks to inequalities \eqref{thmCRnoEX 1},\eqref{thmCRnoEX 2} and \eqref{thmCRnoEX 3} and the fact that $(x_1=y_n,x_2,\dots,x_m,x_{m+1}=y_n)$ is an $\frac{\varepsilon}{3}$-chain for $g_n$, we have 
$$
d(f(y),x_2)\leq d(f(y),f(y_n))+d(f(y_n),g_n(y_n))+d(g_n(y_n),x_2) < \varepsilon.
$$
\\
\noindent Moreover
$$
d(f(x_i),x_{i-1})\leq d(f(x_i),g_n(x_i))+d(g_n(x_i),x_{i+1}) < \frac{2 \varepsilon}{3} < \varepsilon$$
for all $i=2, \ldots,m-1$. \noindent Finally
$$
d(f(x_m),y)\leq d(f(x_m),g_n(x_m))+d(g_n(x_m),y_n)+d(y_n,y) < \varepsilon.
$$
By the arbitrariness of $\varepsilon > 0$, we conclude that $y \in \mathcal{CR}(f)$, obtaining the desired contradiction. \hfill $\Box$ \\
~\newline
\indent We now introduce full explosions for the generalized recurrent set. 
\begin{definizione} \label{la seconda} (No $\mathcal{GR}$-full explosions) Let $f \in Hom(X)$ be such that $\mathcal{GR}(f) \ne X$. We say $f$ does not permit $\mathcal{GR}$-full explosions if there exists a neighborhood $V$ of $f$ in $Hom(X)$ such that if $g \in V$ then $\mathcal{GR}(g) \ne X$.
\end{definizione}
\noindent Clearly, if $f$ does not permit $\mathcal{GR}$-explosions then $f$ does not permit $\mathcal{GR}$-full explosions; if $f$ permits $\mathcal{GR}$-full explosions then $f$ permits $\mathcal{GR}$-explosions. \\
~\newline
\indent {\color{black}{We  observe that}}, unlike the chain recurrent case, $\mathcal{GR}$-(full) explosions can in general occur.
%\marginpar{{\color{red}{\tiny{I like the new, simpler example. - JW}}}} 

\begin{esempio} \label{cantor-fissi} 

On the circle $\mathbb{S}^1$ with the usual topology, consider an interval $I\subsetneq \mathbb{S}^1$ of positive Lebesgue measure. Let $\phi:\mathbb{S}^1\rightarrow[0,+\infty)$ be a non-negative smooth function whose set of zeroes is $I$. Let $f:\mathbb{S}^1\rightarrow\mathbb{S}^1$ be the time-one map of the flow of the vector field $$ V(x)=\phi(x)\dfrac{\partial}{\partial x}.$$
(See Figure \ref{figura3.1}).
In such a case $$ I=\mathcal{GR}(f).$$ 
We observe that, for an arbitrarily small $\varepsilon>0$, there always exists $g_{\varepsilon}\in Hom(\mathbb{S}^1)$ such that $$d_{\mathscr{C}^0}(f,g_{\varepsilon})<\varepsilon$$ and $$\mathbb{S}^1=\mathcal{GR}(g_{\varepsilon}).$$ 
(Simply perturb $\phi$ so that it is positive on $I$). Consequently, $f\in Hom(\mathbb{S}^1)$ is an example of a homeomorphism which permits $\mathcal{GR}$-full explosions. In particular, $f$ admits $\mathcal{GR}$-explosions. 
\begin{figure}[h]
\centering
\includegraphics[scale=.2]{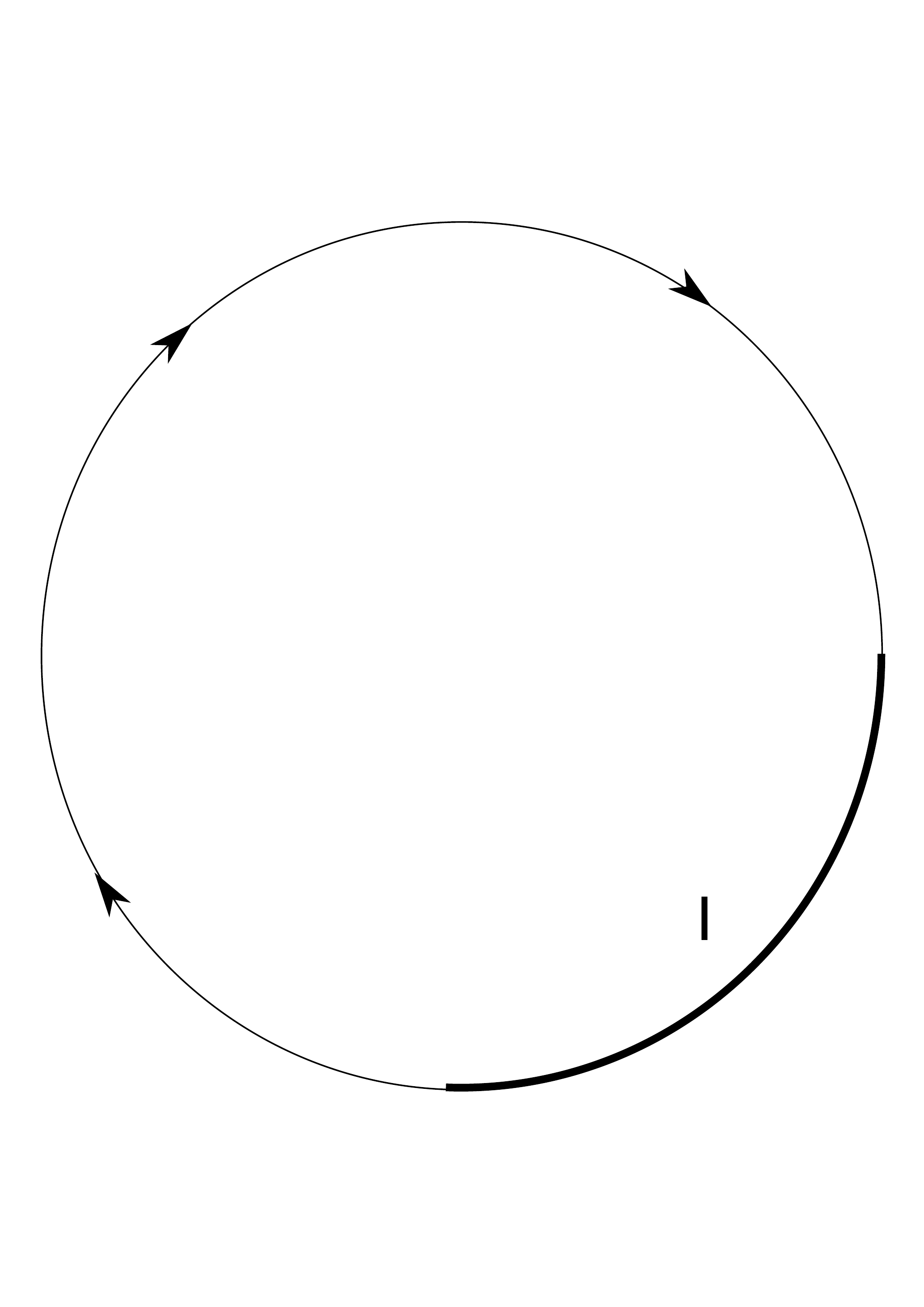}
\caption{The dynamical system of Example \ref{cantor-fissi}.}
\label{figura3.1}
\end{figure}
\end{esempio}

%%% %%%
\section{$\mathcal{GR}$-(full) explosions} \label{tre}
The aim of this section is to discuss some sufficient conditions to avoid $\mathcal{GR}$-(full) explosions. \\
~\newline
\indent The first one comes from a straightforward application of Theorem \ref{teo 1} (see Corollary G in \cite{BF85} for $\mathcal{NW}(f)$).
\begin{corollario} \label{COR UNO}
Let $f \in Hom(X)$ be such that $\mathcal{GR}(f) \ne X$. If $\mathcal{GR}(f) = \mathcal{CR}(f)$ then $f$ does not permit $\mathcal{GR}$-explosions. 
\end{corollario}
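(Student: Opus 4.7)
The plan is to derive the corollary as a short consequence of Theorem \ref{teo 1}, using the universal inclusion $\mathcal{GR}(g) \subset \mathcal{CR}(g)$ recorded in the preliminaries. Under the hypothesis $\mathcal{GR}(f) = \mathcal{CR}(f)$, any open neighborhood $U$ of $\mathcal{GR}(f)$ is simultaneously an open neighborhood of $\mathcal{CR}(f)$, which opens the door to quoting the no-$\mathcal{CR}$-explosion result directly.

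First I would observe that the hypothesis $\mathcal{GR}(f) \ne X$ combined with $\mathcal{GR}(f) = \mathcal{CR}(f)$ gives $\mathcal{CR}(f) \ne X$, so Theorem \ref{teo 1} applies to $f$. Next, given an arbitrary open neighborhood $U$ of $\mathcal{GR}(f)$, I would rewrite $U$ as a neighborhood of $\mathcal{CR}(f)$ and invoke Theorem \ref{teo 1} to obtain a $\mathscr{C}^0$-neighborhood $V$ of $f$ in $Hom(X)$ such that $\mathcal{CR}(g) \subset U$ for every $g \in V$. Finally, I would use the general inclusion $\mathcal{GR}(g) \subset \mathcal{CR}(g)$ (valid for every homeomorphism, as recalled in Section \ref{pre}) to conclude that $\mathcal{GR}(g) \subset U$ for every $g \in V$, which is exactly the no-$\mathcal{GR}$-explosion property in Definition \ref{la prima}.

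There is essentially no obstacle in this argument: the whole proof is a one-line chain of inclusions $\mathcal{GR}(g) \subset \mathcal{CR}(g) \subset U$, made possible by the equality $\mathcal{GR}(f) = \mathcal{CR}(f)$ that lets us pass from neighborhoods of $\mathcal{GR}(f)$ to neighborhoods of $\mathcal{CR}(f)$. The only subtlety worth flagging explicitly is that no analogous equality is assumed for the perturbed $g$: the inclusion $\mathcal{GR}(g) \subset \mathcal{CR}(g)$ alone is sufficient, since we only need containment of $\mathcal{GR}(g)$ in $U$, not an equality with $\mathcal{CR}(g)$.
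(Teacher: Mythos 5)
Your proof is correct and is exactly the "straightforward application of Theorem \ref{teo 1}" that the paper has in mind: the equality $\mathcal{GR}(f)=\mathcal{CR}(f)\ne X$ lets you apply the no-$\mathcal{CR}$-explosion theorem, and the general inclusion $\mathcal{GR}(g)\subset\mathcal{CR}(g)$ transfers the containment in $U$ to the generalized recurrent set of every nearby $g$. No gaps; this matches the paper's (implicit) argument.
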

\noindent Notice that the converse of Corollary \ref{COR UNO} is  false, in general. In the next two examples, we define homeomorphisms which do not permit $\mathcal{GR}$-explosions even though $\mathcal{GR}(f) \ne \mathcal{CR}(f)$.
\begin{esempio}\label{example 2 JW}
Let $\mathbb{S}^1$ be the circle with the usual topology. We consider it as the interval $I = [0,1]$ with the endpoints identified. Let $K\subset\mathbb{S}^1$ be the middle-third Cantor set constructed on the interval $I$. Denote as $\{e_n\}_{n\in\N}\subset K$ the set of endpoints of the removed intervals. At each $e_n$, glue $n$ copies of the interval $I$. Let $X$ be the union of $\mathbb{S}^1$ with all these attached copies of $I$. Define the homeomorphism $f:X\rightarrow X$ as follows: $f$ fixes $K$ and every copy of the interval $I$, $f$ moves all the points in $\mathbb{S}^1\setminus K$ counterclockwise (see Figure \ref{figura en}). Clearly, it holds that
$$\mathcal{CR}(f)=X.$$
Moreover, since there is a Cantor set of fixed points, 
$$\mathcal{GR}(f)=Fix(f).$$
We refer to \cite{FP15}[Example 3.3] for details on this argument. The idea is that, even if $K$ has vanishing Lebesgue measure, the dynamical system is topologically conjugate to the case $\lambda_{\text{Leb}}(K) > 0$, in which case strong chains cannot cross $K$. \\ 
\noindent We observe that every $g \in Hom(X)$ must fix $K$. Indeed, since the $e_n$'s have homeomorphically distinct neighborhoods, any homeomorphism $g$ must fix each $e_n$. Moreover, since the endpoints are dense in $K$, $g$ must fix the entire Cantor set. As a consequence, the homeomorphism $f$ does not permit $\mathcal{GR}$-explosions. 
\begin{figure}[h]
\centering
\includegraphics[scale=.07]{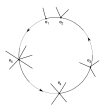}
\caption{The dynamical system of Example \ref{example 2 JW}.}
\label{figura en}
\end{figure}
\end{esempio}
\begin{esempio} \label{wiseman} %\marginpar{{\color{red}{\tiny{I like this new location for the example - JW}}}} 
We construct the compact metric space $X$ as follows. Let $\mathbb{S}^1$ be the circle with the usual topology. For notational convenience, we consider it as the interval $[-2,2]$ with the endpoints identified. Let $K \subset \mathbb{S}^1$ be the middle-third Cantor set constructed on the interval $[-1,1]$.  Denote the set $K \cap [0,1]$ by $K_+$ and the set $K \cap [-1,0]$ by $K_-$. Observe that the element of $K_+$ closest to $0$ is $1/3$ and the element of $K_-$ closest to $0$ is $-1/3$. For each point $k\in K_+$, let $X_{-k,k}$ be an arc (disjoint from $\mathbb{S}^1$) connecting $-k$ in $K_-$ to $k \in K_+$. Assume that these arcs are pairwise disjoint. Finally, let 
$$X = \mathbb{S}^1 \cup \left(\bigcup_{k\in K_+} X_{-k,k}\right).$$
Define the homeomorphism $f:X \to X$ as follows.  On $\mathbb{S}^1$, $f$ fixes $K$ and moves all other points counterclockwise, that is, in the direction of decreasing $x$. On every arc $X_{-k,k}$, the endpoints $-k$ and $k$ are fixed, since they are in $K$, and $f$ moves all other points from $-k \in K_-$ toward $k \in K_+$. See Figure \ref{Wiseman idea}. \\ 
In such a case,
$$\mathcal{CR}(f) = X.$$
Moreover, since there is a Cantor set of fixed points, 
$$\mathcal{GR}(f) = K \cup (-1/3,1/3) \cup X_{-1/3,1/3}.$$  
%\marginpar{{\color{red}{\tiny{I think this is a sufficient explanation about the Cantor set - JW}}}} 
We claim that $f$ does not permit $\mathcal{GR}$-explosions. To see this, let $g$ be a homeomorphism close to $f$. Then $g(K) = K$ and each $X_{-k,k}$ maps to some $X_{-k',k'}$. Consequently, if $g$ moves $k \in K_+$ clockwise then $g$ moves $-k \in K_-$ counterclockwise; if $g$ moves $k \in K_+$ counterclockwise then $g$ moves $-k \in K_-$ clockwise. 
For any $\alpha > 0$, we can assume $g$ close enough to $f$ that 
\begin{itemize}
\item[$(a)$] $g$ moves counterclockwise any $x \in \mathbb{S}^1$ that is not within $\alpha$ of $K$;
\item[$(b)$] For any $x\in X_{-k,k}$ that is not within $\alpha$ of $K$, $g(x)$ is closer to $k \in K_+$ than is $x$.
\end{itemize}
We will show that any $x \in X$ not within $\alpha$ of $\mathcal{GR}(f)$ is not generalized recurrent. Since $\alpha$ is arbitrary, this means that $f$ does not permit $\mathcal{GR}$-explosions. \\
For arbitrary $a,b \in \mathbb{S}^1$, we indicate by $[a,b]$ the closed interval in $\mathbb{S}^1$ obtained by connecting clockwise $a$ and $b$. Let $x$ be a point not within $\alpha$ of $\mathcal{GR}(f)$. Then the following three cases can occur:
\begin{itemize}
\item[$(i)$] The point $x \in [0,2]$ (the right-side semicircle in Figure \ref{Wiseman idea}). Then, recalling that points along the arcs $X_{-k,k}$ move from $K_-$ toward $K_+$, any chain from $x$ back to itself must go counterclockwise to $[-2,0]$ (the left-side semicircle in Figure \ref{Wiseman idea}). If every point of $K_+ \cap [0,x]$ is fixed, then for some metric and some $\varepsilon > 0$ there is no strong $\varepsilon$-chain from $x$ to $[-2,0]$, and thus $x$ cannot be generalized recurrent. Otherwise, there must be a point $k' \in K_+ \cap [0,x]$ that is not fixed. If $g$ moves $k'$ clockwise, then the interval $[k',x]$ maps into its interior. As a consequence, for small enough $\varepsilon > 0$ no $\varepsilon$-chain can get from $[k',x]$ to the left-side semicircle, and so $x$ is not chain recurrent. In particular, $x$ is not generalized recurrent. If $g$ moves $k'$ counterclockwise, then $g$ moves $-k'$ clockwise. Then the interval $[-k',x]$ maps into its interior, and again $x$ is not chain recurrent and therefore not generalized recurrent. 
\item[$(ii)$] The point $x \in [-2,0]$. To return to itself, it would first have to pass through $[0,2]$. Then the argument in $(i)$ shows that $x$ cannot return to $[-2,0]$, and thus $x$ is not generalized recurrent.
\item[$(iii)$] The point $x \in X_{-k,k}$ for some $k \in K_+$. Again, since points along the arcs $X_{-k,k}$ move from $K_-$ toward $K_+$, the only way for the point $x$ to come back to itself is passing through $[0,2]$. Then the argument in $(i)$ shows that $x$ cannot belong to $\mathcal{GR}(g)$.
\end{itemize}
Summarizing, $f\in Hom(X)$ is an example of a homeomorphism such that $\mathcal{GR}(f) \ne \mathcal{CR}(f)$ and $f$ does not permit $\mathcal{GR}$-explosions.

\begin{figure}[h]
\centering
\includegraphics[scale=.3]{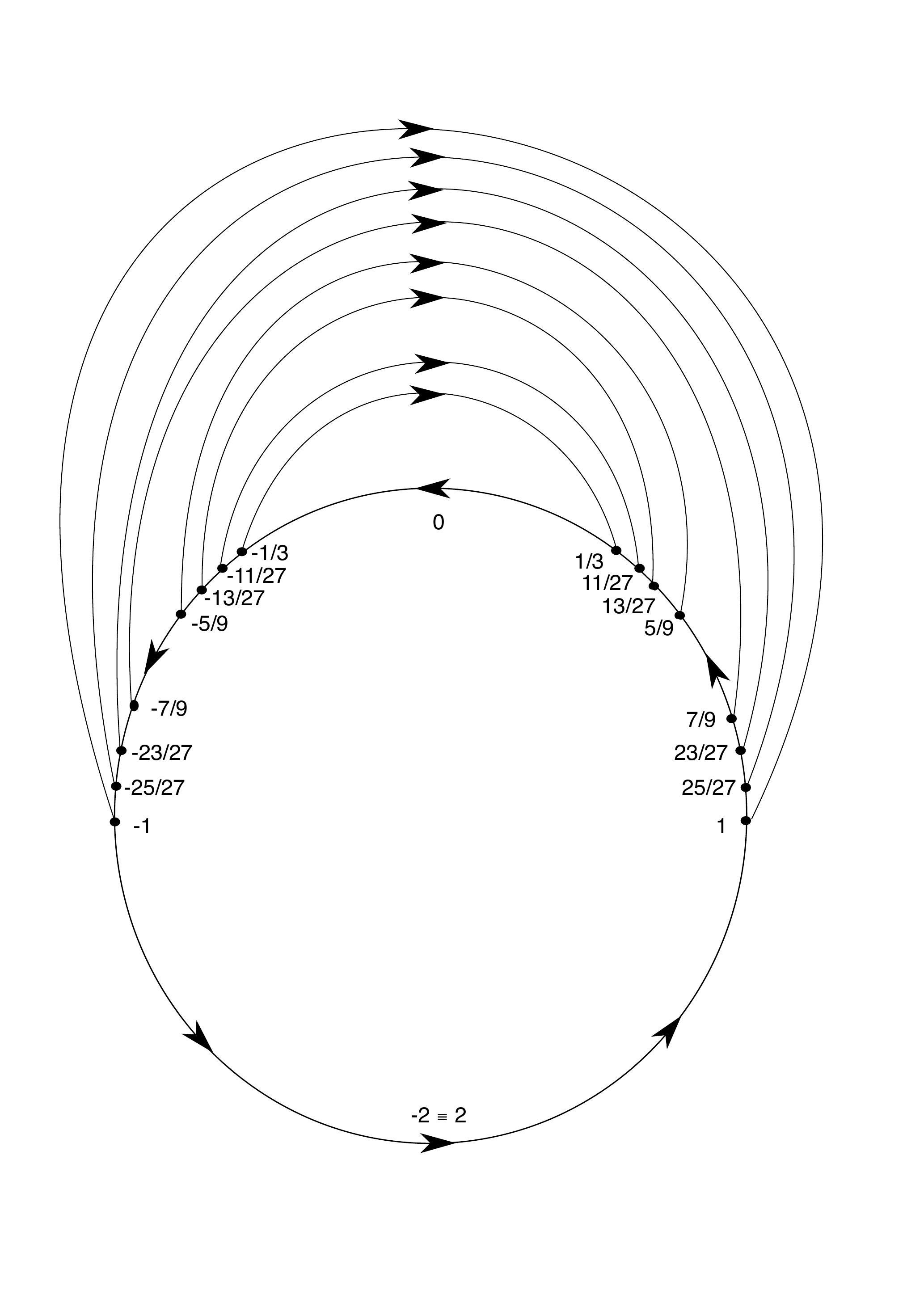}
\caption{The dynamical system of Example \ref{wiseman}.}
\label{Wiseman idea}
\end{figure}
\end{esempio}

\noindent The other implication is valid on a compact topological manifold $M$ with $dim(M) \ge 2$; see Proposition \ref{eccola} below (see also Theorem H in \cite{BF85} for $\mathcal{NW}(f)$). To prove this fact we need the following $\mathscr{C}^0$ closing lemma; see \cite{BF85}[Lemmas 4 and 5] and \cite{NS}[Lemma 13]. 
\begin{lemma} \label{C0} Let $f \in Hom(M)$ be defined on a compact topological manifold $M$ of $dim(M) \ge 2$. If $x \in \mathcal{CR}(f)$ then for any neighborhood $V$ of $f$ in $Hom(M)$ there exists $g \in V$ such that $x \in Per(g)$. 
\end{lemma}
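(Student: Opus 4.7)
Fix $\eta>0$ so that every $h \in Hom(M)$ with $d_{\mathscr{C}^0}(h,f)<\eta$ lies in $V$. By uniform continuity of $f$, choose $\varepsilon\in(0,\eta/2)$ so that $d(z,w)<\varepsilon$ implies $d(f(z),f(w))<\eta/2$. Since $x\in\mathcal{CR}(f)$, there is an $\varepsilon$-chain $x=x_0,x_1,\dots,x_n=x$. The strategy is to construct $g\in V$ satisfying $g(x_i)=x_{i+1}$ for $i=0,\dots,n-1$; then $g^n(x)=x$, and therefore $x\in Per(g)$ as required.

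I would seek $g$ of the form $g=\phi\circ f$ where $\phi\in Hom(M)$ is close to the identity. Writing $p_i:=f(x_i)$ and $q_i:=x_{i+1}$, one has $d(p_i,q_i)<\varepsilon$ for every $i$, and the task reduces to producing $\phi\in Hom(M)$ with $d_{\mathscr{C}^0}(\phi,\mathrm{id})<\eta$ and $\phi(p_i)=q_i$ for all $i$. A preliminary reduction, allowed because $\dim M\ge 2$ (so any nonempty open set contains points off a prescribed finite set), is to perturb the interior chain vertices $x_1,\dots,x_{n-1}$ slightly while preserving the $\varepsilon$-chain property, so that the $p_i$ are pairwise distinct and the $q_i$ are pairwise distinct.

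The core of the argument is then purely geometric: choose closed topological balls $B_0,\dots,B_{n-1}$ in $M$, each of diameter less than $\eta$, each containing both $p_i$ and $q_i$, and pairwise disjoint. Once such balls are available, one uses homogeneity of the open ball in a manifold to pick, inside each $B_i$, a homeomorphism $\phi_i:B_i\to B_i$ fixing $\partial B_i$ and sending $p_i$ to $q_i$; extending each $\phi_i$ by the identity outside $B_i$ and composing yields a single $\phi\in Hom(M)$, supported on $\bigcup B_i$, satisfying $\phi(p_i)=q_i$ and $d_{\mathscr{C}^0}(\phi,\mathrm{id})<\eta$. Then $g:=\phi\circ f$ lies in $V$ and $g(x_i)=x_{i+1}$, completing the proof.

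The main obstacle is the disjoint-ball step. In dimension one, the cyclic ordering of the points $p_i,q_i$ on $M$ can force any arc containing a pair $(p_i,q_i)$ to overlap another such arc, so the construction genuinely fails on the circle (cf.\ Example~\ref{cantor-fissi}). In dimension at least two, the complement of the finite set $\{p_i,q_i\}$ is connected and locally path-connected, so one can join each $p_i$ to $q_i$ by a short simple arc $\gamma_i$ and, after a small ambient perturbation of the $\gamma_i$'s to make them pairwise disjoint, thicken each $\gamma_i$ to a thin topological ball $B_i$. This is the step where $\dim M\ge 2$ is used essentially, and the careful verification that the $B_i$ can be made pairwise disjoint while remaining inside small neighborhoods of the $\gamma_i$'s is the most delicate part.
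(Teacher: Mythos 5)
The paper does not prove this lemma at all: it is quoted from the literature (Lemmas 4 and 5 of \cite{BF85} and Lemma 13 of \cite{NS}), so the comparison is between your sketch and those references. Your overall strategy is the standard one and agrees with theirs in outline: take an $\varepsilon$-chain from $x$ to itself and close it up by setting $g=\phi\circ f$ with $\phi$ a near-identity homeomorphism sending each $p_i=f(x_i)$ to $q_i=x_{i+1}$. The reduction to constructing $\phi$ is fine (note that for $g=\phi\circ f$ you get $d_{\mathscr{C}^0}(g,f)=d_{\mathscr{C}^0}(\phi,\mathrm{id})$ directly, so the uniform-continuity step is not even needed).

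The gap is that the existence of $\phi$ is exactly the hard part, and your disjoint-balls construction does not deliver it. First, making the $p_i$ pairwise distinct and the $q_i$ pairwise distinct is not the right reduction: the obstruction to disjoint balls $B_i\supseteq\{p_i,q_i\}$ is a coincidence $p_i=q_j$ with $i\neq j$, and this cannot always be removed by perturbing interior chain vertices. For instance, if a vertex $x_i$ lies in the interior of $\mathrm{Fix}(f)$, then $p_i=f(x_i)=x_i=q_{i-1}$ for every nearby choice of $x_i$, so $B_{i-1}$ and $B_i$ must share a point; the conditions $\phi(p_{i-1})=q_{i-1}$ and $\phi(p_i)=q_i$ are still jointly consistent (distinct sources, distinct targets), but they can no longer be realized by homeomorphisms supported in disjoint balls. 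Second, even when all $2n$ points are distinct, the claim that the arcs $\gamma_i$ can be made pairwise disjoint by ``a small ambient perturbation'' while keeping each of uniformly small diameter is precisely what must be proved: in dimension $2$ general position does not separate arcs in a surface (transverse intersections of arcs are not removable locally), and disjointness is achieved only by rerouting, whose cost must be bounded independently of $n$ and of how the pairs interleave. This uniform control is the actual content of Lemma 13 of \cite{NS} (which this paper invokes again in Appendix \ref{APP}), and it is why a universal multiplicative constant ($4\pi$) appears there instead of the factor $1$ your sketch implicitly assumes. As written, your argument proves the lemma only under the additional, unjustified hypothesis that the pairs $(p_i,q_i)$ can be enclosed in pairwise disjoint small balls; to be complete it should either cite the Nitecki--Shub lemma for the existence of $\phi$ or reproduce an argument of that type.
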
 
\begin{proposizione} \label{eccola} Let $f \in Hom(M)$ be defined on a compact topological manifold $M$ of $dim(M) \ge 2$ and such that $\mathcal{GR}(f) \ne M$. $\mathcal{GR}(f) = \mathcal{CR}(f)$ if and only if $f$ does not permit $\mathcal{GR}$-explosions.
\end{proposizione}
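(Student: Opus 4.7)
The forward direction ($\mathcal{GR}(f)=\mathcal{CR}(f)$ implies no $\mathcal{GR}$-explosions) is immediate from Corollary \ref{COR UNO}, so the content of the proposition is the converse. I would establish this by contrapositive: assuming $\mathcal{GR}(f)\subsetneq\mathcal{CR}(f)$, I will exhibit an open neighborhood of $\mathcal{GR}(f)$ that is violated by $\mathcal{GR}(g)$ for homeomorphisms $g$ arbitrarily $\mathscr{C}^0$-close to $f$.

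Pick $x\in\mathcal{CR}(f)\setminus\mathcal{GR}(f)$. Since $\mathcal{GR}(f)$ is closed and $M$ is a (metrizable, hence normal) compact manifold, there exists an open neighborhood $U$ of $\mathcal{GR}(f)$ in $M$ with $x\notin U$ (for example, take $U$ and an open $W\ni x$ disjoint, using normality; or simply $U=M\setminus\{x\}$). It is enough to produce, in every neighborhood $V$ of $f$ in $Hom(M)$, some $g\in V$ for which $\mathcal{GR}(g)\not\subset U$, because then $f$ would permit $\mathcal{GR}$-explosions.

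This is exactly what the $\mathscr{C}^0$ closing lemma (Lemma \ref{C0}) provides. Given any neighborhood $V$ of $f$, since $x\in\mathcal{CR}(f)$ and $\dim(M)\ge 2$, there exists $g\in V$ such that $x\in Per(g)$. Periodic points are non-wandering, and by the general inclusion recalled in Section \ref{pre} we have $\mathcal{NW}(g)\subset\mathcal{GR}(g)$; hence $x\in\mathcal{GR}(g)$. Because $x\notin U$, this shows $\mathcal{GR}(g)\not\subset U$, giving the desired explosion and completing the contrapositive.

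I do not foresee a real obstacle here: the argument is structurally the same as Shub--Smale's proof of the analogous statement for $\mathcal{NW}$ (Lemma 2 in \cite{SS}), and the two ingredients we rely on --- the inclusion $\mathcal{NW}(g)\subset\mathcal{GR}(g)$ together with invariance/closedness of $\mathcal{GR}$, and Lemma \ref{C0} --- are both stated earlier in the paper. The hypothesis $\dim(M)\ge 2$ is used only through the closing lemma; the hypothesis $\mathcal{GR}(f)\ne M$ is used implicitly to make the notion of $\mathcal{GR}$-explosion meaningful (Definition \ref{la prima}).
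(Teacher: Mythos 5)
Your proposal is correct and uses exactly the same ingredients as the paper's proof (Corollary \ref{COR UNO} for one direction, and Lemma \ref{C0} together with $Per(g)\subseteq\mathcal{GR}(g)$ for the other); the paper merely phrases the converse directly, via the chain of inclusions $\mathcal{CR}(f)\subseteq\bigcup_{g\in V}Per(g)\subseteq\bigcup_{g\in V}\mathcal{GR}(g)\subseteq U$ for arbitrary $U$, whereas you argue the contrapositive. The two formulations are logically equivalent, so there is nothing to fix.
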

\noindent \textit{Proof.} One direction is exactly Corollary \ref{COR UNO}. Assume now that $f$ does not permit $\mathcal{GR}$-explosions, which means that for any open neighborhood $U$ of $\mathcal{GR}(f)$ there exists a neighborhood $V$ of $f$ in $Hom(M)$ such that for any $g\in V$ we have $\mathcal{GR}(g)\subset U$. Then we have:
$$
\mathcal{GR}(f)\subseteq \mathcal{CR}(f) \subseteq \bigcup_{g\in V} Per (g)\subseteq \bigcup_{g\in V}\mathcal{GR}(g)\subseteq U,
$$
where the second inclusion comes from the $\mathscr{C}^0$ closing lemma, here Lemma \ref{C0}. Since the neighborhood $U$ of $\mathcal{GR}(f)$ is arbitrary, we conclude that $\mathcal{GR}(f)=\mathcal{CR}(f)$. \hfill $\Box$ \\
~\newline
\indent The second sufficient condition which avoids $\mathcal{GR}$-explosions is topological stability. We recall the notion of topologically stable homeomorphism, see e.g. \cite{WAL78}[Definition 5] .%\cite{Hu}[Page 62].  
%\marginpar{{\color{red}{\tiny{I agree, just continuity is better - JW}}}}
\begin{definizione} \label{due a e b} A homeomorphism $f \in Hom(X)$ is topologically stable if there exists a neighborhood $V$ of $f$ in $Hom(X)$ such that for each $g \in V$ there is a continuous function $h_g: X \to X$ satisfying:
\begin{itemize}
\item[$(i)$] $h_g \circ g = f \circ h_g$ (semi-conjugation);
\item[$(ii)$] $h_g \to id$ as $g \to f$ in the uniform topology.
\end{itemize}
\end{definizione}
\begin{proposizione}\label{top stab imply no GR exp}
Let $f\in Hom (X)$ be such that $\mathcal{GR}(f)\neq X$. If $f$ is topologically stable then $f$ does not permit $\mathcal{GR}$-explosions.
\end{proposizione}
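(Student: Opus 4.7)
The plan is to exploit Theorem \ref{generalized} together with the semi-conjugation provided by topological stability. By Theorem \ref{generalized}, there is a continuous Lyapunov function $u: X \to \R$ for $f$ with $\mathcal{N}(u) = \mathcal{GR}(f)$. Let $V$ be the neighborhood provided by topological stability, and for each $g \in V$ let $h_g: X \to X$ satisfy $h_g \circ g = f \circ h_g$ with $h_g \to id$ uniformly as $g \to f$. My first step is to produce, from $u$, a Lyapunov function for $g$. Setting $v_g := u \circ h_g$, one immediately checks
$$v_g(g(x)) = u(h_g(g(x))) = u(f(h_g(x))) \le u(h_g(x)) = v_g(x),$$
so $v_g \in \mathscr{L}(g)$, and its neutral set is
$$\mathcal{N}(v_g) = \{x \in X : u(f(h_g(x))) = u(h_g(x))\} = h_g^{-1}(\mathcal{N}(u)) = h_g^{-1}(\mathcal{GR}(f)).$$

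The second step applies Theorem \ref{generalized} to $g$: since $\mathcal{GR}(g)$ is contained in the neutral set of every continuous Lyapunov function of $g$, we conclude $\mathcal{GR}(g) \subseteq h_g^{-1}(\mathcal{GR}(f))$, i.e.\ $h_g(\mathcal{GR}(g)) \subseteq \mathcal{GR}(f)$. This inclusion is the key mechanism: the semi-conjugation pushes generalized recurrence forward into $\mathcal{GR}(f)$.

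The third step translates this into a no-explosion statement. Fix an open neighborhood $U$ of $\mathcal{GR}(f)$. Since $\mathcal{GR}(f)$ is compact and $U$ is open, there exists $\delta>0$ such that the open $\delta$-neighborhood of $\mathcal{GR}(f)$ is contained in $U$. Because $h_g \to id$ uniformly as $g \to f$, we can shrink $V$ to a neighborhood $V'$ of $f$ so that $\max_{x\in X} d(h_g(x),x) < \delta$ for all $g \in V'$. For any $g \in V'$ and any $x \in \mathcal{GR}(g)$, the point $h_g(x)$ lies in $\mathcal{GR}(f)$ and $d(x,h_g(x)) < \delta$, so $x \in U$; hence $\mathcal{GR}(g) \subseteq U$, which is exactly the definition of no $\mathcal{GR}$-explosions.

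The proof is essentially a one-line computation once one notices that a Lyapunov function of $f$ pulls back along a semi-conjugation to a Lyapunov function of $g$; the only mild subtlety is recognizing that Theorem \ref{generalized} gives both the existence of the optimal $u$ with $\mathcal{N}(u) = \mathcal{GR}(f)$ and the characterization of $\mathcal{GR}(g)$ as an intersection of neutral sets. I do not expect any real obstacle: compactness of $X$ handles the passage from "$h_g(x) \in \mathcal{GR}(f)$ with $d(x,h_g(x))$ small" to "$x$ lies in the prescribed neighborhood $U$," and the hypothesis $\mathcal{GR}(f)\ne X$ is only used implicitly in that Definition \ref{la prima} presumes it.
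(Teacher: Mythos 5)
Your proof is correct and takes essentially the same route as the paper: the key inclusion $h_g(\mathcal{GR}(g))\subseteq \mathcal{GR}(f)$, obtained by pulling back Lyapunov functions along the semi-conjugation via Theorem \ref{generalized}, is exactly the paper's Lemma \ref{lemma top stab and GR} (which you streamline slightly by using the single function $u$ with $\mathcal{N}(u)=\mathcal{GR}(f)$ instead of the full intersection), and the conclusion then follows from $h_g\to id$ uniformly. The only cosmetic difference is that you argue directly with a $\delta$-neighborhood of $\mathcal{GR}(f)$, whereas the paper runs the same estimate as a contradiction with sequences $g_n\to f$, $x_n\in\mathcal{GR}(g_n)\setminus U$.
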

\noindent In order to prove Proposition \ref{top stab imply no GR exp}, we need the next result.
\begin{lemma}\label{lemma top stab and GR}
Let $f,g\in Hom(X)$. If there exists a continuous function $h:X\rightarrow X$ such that $$h\circ g=f\circ h$$ 
then $h(\mathcal{GR}(g))\subseteq \mathcal{GR}(f)$.
\end{lemma}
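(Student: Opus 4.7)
The plan is to exploit the characterization of the generalized recurrent set through continuous Lyapunov functions, given by Theorem \ref{generalized}: $\mathcal{GR}(f) = \bigcap_{u \in \mathscr{L}(f)} \mathcal{N}(u)$. The semi-conjugacy $h \circ g = f \circ h$ behaves well with respect to pullback of Lyapunov functions, so pushing a $\mathcal{GR}(g)$-point forward by $h$ should land inside every neutral set of every Lyapunov function of $f$.

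The first step is the key observation: if $u \in \mathscr{L}(f)$, then $u \circ h \in \mathscr{L}(g)$. Indeed, using the semi-conjugacy and the fact that $u(f(\cdot)) \le u(\cdot)$,
\[
(u \circ h)(g(x)) = u(h(g(x))) = u(f(h(x))) \le u(h(x)) = (u \circ h)(x),
\]
and $u \circ h$ is continuous as a composition of continuous maps. The second step is to compute the neutral set. Fix $x \in \mathcal{GR}(g)$; then by Theorem \ref{generalized} applied to $g$, we have $x \in \mathcal{N}(u \circ h)$ for every $u \in \mathscr{L}(f)$, i.e.\ $(u \circ h)(g(x)) = (u \circ h)(x)$. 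Rewriting via the semi-conjugacy,
\[
u(f(h(x))) = u(h(g(x))) = u(h(x)),
\]
which says exactly that $h(x) \in \mathcal{N}(u)$.

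The third step is to intersect over all $u \in \mathscr{L}(f)$: since $h(x) \in \mathcal{N}(u)$ for every continuous Lyapunov function $u$ of $f$, Theorem \ref{generalized} for $f$ yields $h(x) \in \mathcal{GR}(f)$. As $x \in \mathcal{GR}(g)$ was arbitrary, we conclude $h(\mathcal{GR}(g)) \subseteq \mathcal{GR}(f)$.

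There is no real obstacle here; everything reduces to the trivial verification that $u \circ h$ is a Lyapunov function for $g$ whenever $u$ is one for $f$, which is a direct consequence of the semi-conjugacy and the one-sided inequality defining $\mathscr{L}(f)$. Note that $h$ need not be injective or surjective, so one should not try to argue via a direct manipulation of strong $\varepsilon$-chains (the change of metric under a non-injective $h$ would be awkward); passing through the Lyapunov characterization is precisely what makes the argument clean.
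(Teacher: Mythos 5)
Your proof is correct and coincides with the paper's own argument: both establish that $u \circ h \in \mathscr{L}(g)$ whenever $u \in \mathscr{L}(f)$ via the semi-conjugacy, then use the characterization $\mathcal{GR} = \bigcap_{u} \mathcal{N}(u)$ of Theorem \ref{generalized} on both $g$ and $f$ to conclude $h(x) \in \mathcal{N}(u)$ for every $u \in \mathscr{L}(f)$. No gaps; the only difference is purely notational.
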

\noindent\textit{Proof.} Recall that, by Theorem \ref{generalized}, 
$$
\mathcal{GR}(f)=\bigcap_{u\in\mathscr{L}(f)}\mathcal{N}(u)
$$
where the intersection is taken over the set $\mathscr{L}(f)$ of continuous Lyapunov functions for $f$. \\
Let us show that if $u\in\mathscr{L}(f)$ then $u\circ h\in\mathscr{L}(g)$. Indeed for any $z\in X$, since $h\circ g=f\circ h$ and $u\in\mathscr{L}(f)$, we have
$$
u\circ h\circ g(z)=u\circ f\circ h(z)\leq u\circ h(z).
$$
Now let $z=h(x)\in h(\mathcal{GR}(g))$ for some $x\in \mathcal{GR}(g)$ and consider $u\in\mathscr{L}(f)$. Since $h\circ g=f\circ h$, $x\in \mathcal{GR}(g)$ and --as remarked above-- $u\circ h\in\mathscr{L}(g)$, we deduce that
$$
u\circ f(z)=u\circ f\circ h(x)=u\circ h\circ g(x)=u\circ h(x)=u(z).
$$
This means that $z=h(x)\in\mathcal{N}(u)$. By the arbitrariness of $u\in\mathscr{L}(f)$ and by Theorem \ref{generalized}, we conclude that $z\in \mathcal{GR}(f)$. Equivalently, $h(\mathcal{GR}(g)) \subseteq \mathcal{GR}(f)$. \hfill $\Box$ \\
~\newline
\noindent We now prove Proposition \ref{top stab imply no GR exp}.\\
~\newline
\noindent\textit{Proof of Proposition \ref{top stab imply no GR exp}.} %\marginpar{{\color{red}{\tiny{The proof looks good to me - JW.}}}} 
Argue by contradiction and suppose there are an open neighborhood $U$ of $ \mathcal{GR}(f)$ in $X$, a sequence $(g_n)_{n \in \mathbb{N}} \in Hom(X)$ converging to $f$ in the uniform topology and a sequence of points $(x_n)_{n \in \mathbb{N}}$ such that $x_n \in \mathcal{GR}(g_n) \setminus U$. \\
\noindent Since $f$ is topologically stable and the sequence $(g_n)_{n \in \mathbb{N}}$ converges uniformly to $f$, there exists an index $\bar{n}\in\N$ such that 
$$h_n\circ g_n=f\circ h_n \qquad \forall n\geq\bar{n}$$
where $h_n:X\rightarrow X$ is a continuous map. Consequently, by Lemma \ref{lemma top stab and GR}, 
\begin{equation}\label{eq 1 prop top stab}
h_n(\mathcal{GR}(g_n))\subseteq \mathcal{GR}(f) \qquad \forall n \ge \bar{n}.
\end{equation}
Moreover, by Definition \ref{due a e b}, the sequence $(h_n)_{n\in\N}$ converges uniformly to $id$. \\
Define the following continuous function
$$
X\setminus U\ni y\mapsto d(y,\mathcal{GR}(f)) := \min_{x\in \mathcal{GR}(f)} d(y,x).
$$
Since $X\setminus U$ is compact and $(X\setminus U)\cap \mathcal{GR}(f)=\emptyset$, it holds that $\eta := \min_{y\in X\setminus U} d(y,\mathcal{GR}(f)) >0$. \\
Let now $\tilde{n}\in\N$, $\tilde{n}\geq\bar{n}$ be such that 
\begin{equation}\label{eq 2 proof prop top stab}
d_{\mathscr{C}^0}(id,h_n)< \eta \qquad \forall n\geq\tilde{n}.
\end{equation}
On one hand, from \eqref{eq 2 proof prop top stab} we immediately deduce that
\begin{equation} \label{prima dis}
d(x_n,h_n(x_n))< \eta \qquad \forall n\geq\tilde{n}.
\end{equation}
On the other hand, since $x_n\in \mathcal{GR}(g_n)\setminus U$ then $h_n(x_n) \in \mathcal{GR}(f)$ by inclusion \eqref{eq 1 prop top stab}. As a consequence
\begin{equation} \label{seconda dis}
d(x_n,h_n(x_n))\geq \min_{x\in \mathcal{GR}(f)} d(x_n,x)= d(x_n,\mathcal{GR}(f))\geq \min_{y\in X\setminus U} d(y,\mathcal{GR}(f))=\eta.
\end{equation}
Inequalities (\ref{prima dis}) and (\ref{seconda dis}) provide the required contradiction. \hfill $\Box$ \\
~\newline
Clearly, from the previous proposition, we immediately deduce that if $f \in Hom(X)$ with $\mathcal{GR}(f) \ne X$ is topologically stable then $f$ does not permit $\mathcal{GR}$-full explosions. 
As proved by P.\ Walters in \cite{WAL69}[Theorem 1], any Anosov diffeomorphism on a smooth, compact manifold $M$ without boundary is topologically stable. Consequently, the previous proposition applies in particular to every  Anosov diffeomorphism with $\mathcal{GR}(f) \ne M$. \\
~\newline
As with Corollary \ref{COR UNO},  the converse of Proposition \ref{top stab imply no GR exp} is false in general, as shown in the next example.
\begin{esempio}\label{attrattore-repulsore}  On the circle $\mathbb{S}^1 := \R/2\pi\Z$ embedded in $\mathbb{R}^2$ with the usual induced topology, for $n \ge 1$ consider the points
$$
P_n := \left( \cos \left( \frac{\pi}{2^n} \right),  \sin \left( \frac{\pi}{2^n} \right) \right)$$
and
$$Q_n := \left( \cos \left( \frac{\pi}{2^n} \right),  -\sin \left( \frac{\pi}{2^n} \right) \right).$$
Let $f : \mathbb{S}^1 \to \mathbb{S}^1$ be a homeomorphism which fixes exactly $(1,0)$ and every $P_n$ and $Q_n$ and such that:
\begin{itemize}
\item[$(i)$] Every $Q_{n-1}$ and $P_n$ with $n \ge 2$ even is an attractor.
\item[$(ii)$] Every $Q_n$ and $P_{n-1}$ with $n \ge 2$ even is a repeller.
\end{itemize}
We refer to Figure \ref{attr-rep}. In such a case,
$$Fix(f) = \mathcal{GR}(f) = \mathcal{CR}(f).$$
On one hand, since $Fix(f)$ is an infinite set, $f$ is not topologically stable (see \cite{YA}[Theorem 1]). On the other hand, since $\mathcal{GR}(f) = \mathcal{CR}(f)$, $f$ does not permit $\mathcal{GR}$-explosions (see Corollary \ref{COR UNO}). (It is also easy to  verify directly that $f$ does not permit $\mathcal{GR}$-explosions.)
\begin{figure}[h]
\centering
\includegraphics[scale=.25]{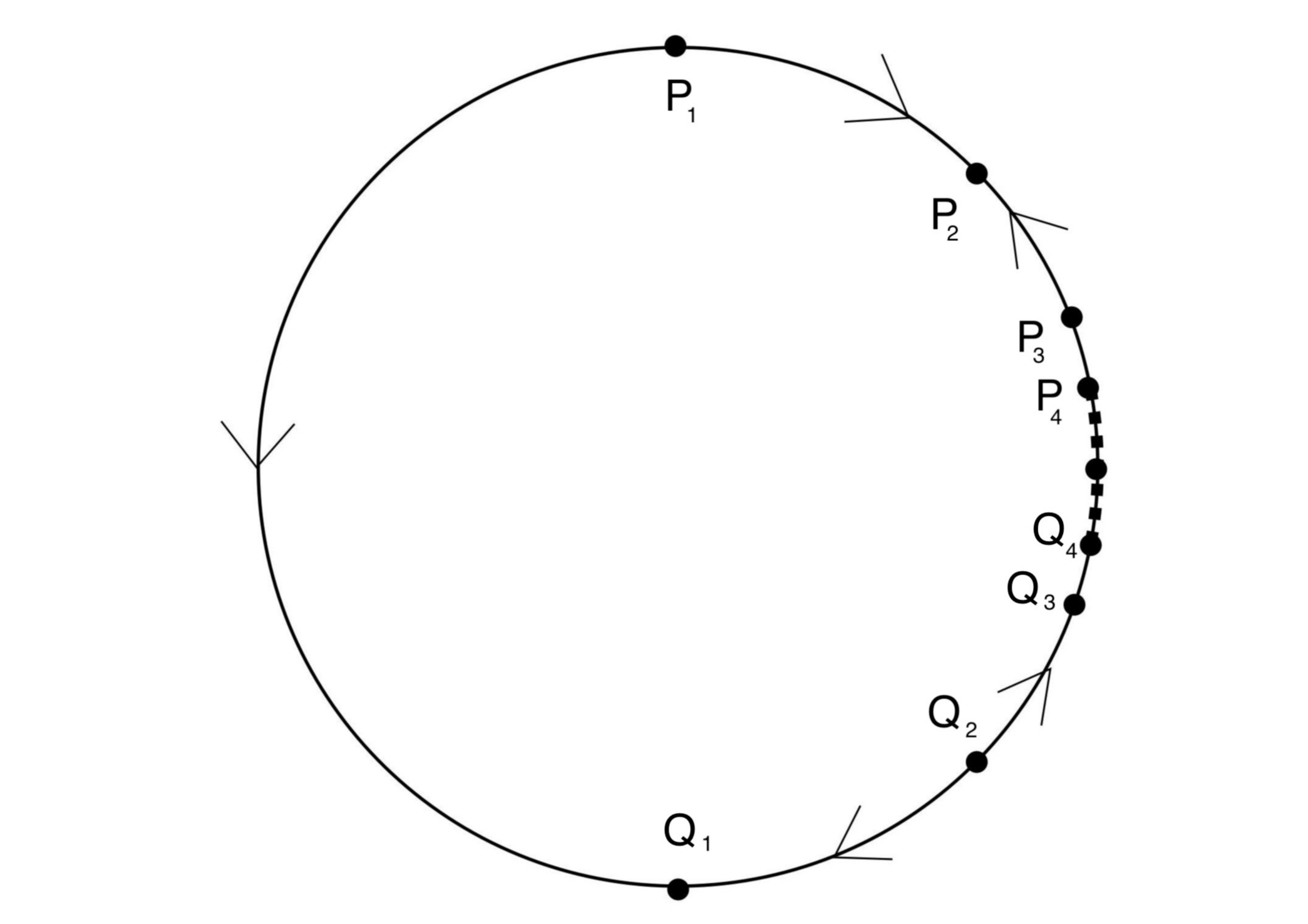}
\caption{The dynamical system of Example \ref{attrattore-repulsore}.}
\label{attr-rep}
\end{figure}
\end{esempio}

\indent We conclude this section by discussing a sufficient condition to avoid $\mathcal{GR}$-full explosions. This condition is an immediate corollary of Theorem \ref{teo 1}. 
\begin{proposizione} \label{anna} Let $f \in Hom(X)$ be such that $\mathcal{GR}(f) \ne X$. If $\mathcal{CR}(f) \ne X$ then $f$ does not permit $\mathcal{GR}$-full explosions. 
\end{proposizione}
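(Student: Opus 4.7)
The plan is to deduce Proposition \ref{anna} directly from Theorem \ref{teo 1} (no $\mathcal{CR}$-explosions) together with the containment $\mathcal{GR}(g) \subseteq \mathcal{CR}(g)$ recalled in Section \ref{pre}. The point is that a $\mathcal{GR}$-full explosion for $g$ would force $\mathcal{CR}(g) = X$, which is precisely what Theorem \ref{teo 1} forbids for $g$ sufficiently close to $f$, once we choose a proper neighborhood of $\mathcal{CR}(f)$.

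First, I would use the hypothesis $\mathcal{CR}(f) \ne X$ to produce an open set $U \subset X$ with $\mathcal{CR}(f) \subset U$ and $U \ne X$. Since $\mathcal{CR}(f)$ is closed in the compact metric space $X$ and is not all of $X$, one can pick a point $p \notin \mathcal{CR}(f)$, set $\eta := \tfrac{1}{2} d(p, \mathcal{CR}(f)) > 0$, and take
\[
U := \{x \in X : d(x, \mathcal{CR}(f)) < \eta\},
\]
which is open, contains $\mathcal{CR}(f)$, and misses $p$, so $U \ne X$.

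Next, I would apply Theorem \ref{teo 1}: $f$ does not permit $\mathcal{CR}$-explosions, so there exists a neighborhood $V$ of $f$ in $Hom(X)$ with $\mathcal{CR}(g) \subset U$ for every $g \in V$. Combining this with the general inclusion $\mathcal{GR}(g) \subseteq \mathcal{CR}(g)$ yields $\mathcal{GR}(g) \subset U \ne X$ for every $g \in V$, hence $\mathcal{GR}(g) \ne X$. By Definition \ref{la seconda}, $f$ does not permit $\mathcal{GR}$-full explosions.

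There is essentially no obstacle here: the only mildly non-formal step is the construction of the proper open neighborhood $U$ of $\mathcal{CR}(f)$, which is immediate from compactness and the closedness of the chain recurrent set. The rest is a one-line chain of inclusions followed by a direct appeal to Theorem \ref{teo 1}, which justifies the author's remark that the proposition is an immediate corollary of the no-$\mathcal{CR}$-explosion theorem.
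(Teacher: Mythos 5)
Your proof is correct and follows essentially the same route as the paper: both arguments rest on Theorem \ref{teo 1} together with the inclusion $\mathcal{GR}(g)\subseteq\mathcal{CR}(g)$, the only difference being that the paper phrases it as a contradiction (a full explosion would give $g_n\to f$ with $\mathcal{CR}(g_n)=X$, violating no-$\mathcal{CR}$-explosions), while you argue directly by fixing a proper neighborhood $U$ of $\mathcal{CR}(f)$. No gap.
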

\noindent \textit{Proof.} Arguing by contradiction, we assume that $f$ admits $\mathcal{GR}$-full explosions. This means that there exists a sequence $(g_n)_{n\in\N}$ in $Hom(X)$ converging to $f$ in the uniform topology such that $\mathcal{GR}(g_n)=X$ for any $n \in \mathbb{N}$. Consequently, $\mathcal{GR}(g_n)=\mathcal{CR}(g_n)=X$ for any $n \in \mathbb{N}$. Since by hypothesis $\mathcal{CR}(f) \ne X$, we conclude that $f$ permits $\mathcal{CR}$-explosions and this fact contradicts Theorem \ref{teo 1}. \hfill $\Box$ \\ \\
Note that, in general, the converse of Proposition \ref{anna} may be false: in Examples \ref{example 2 JW} and \ref{wiseman} we have defined homeomorphisms on a compact metric space such that $\mathcal{GR}(f) \subsetneq \mathcal{CR}(f) = X$ and $f$ does not permit $\mathcal{GR}$-explosions. In particular, $f$ does not permit $\mathcal{GR}$-full explosions. \\
In the following final example, we slightly modify Example \ref{example 2 JW} in order to obtain $f\in Hom(X)$ such that $\mathcal{GR}(f) \subsetneq \mathcal{CR}(f) = X$ and $f$ does not permit $\mathcal{GR}$-full explosions even though $f$ permits $\mathcal{GR}$-explosions.
\begin{esempio}\label{example JW 2 revised}
	\begin{figure}[h]
		\centering
		\includegraphics[scale=.3]{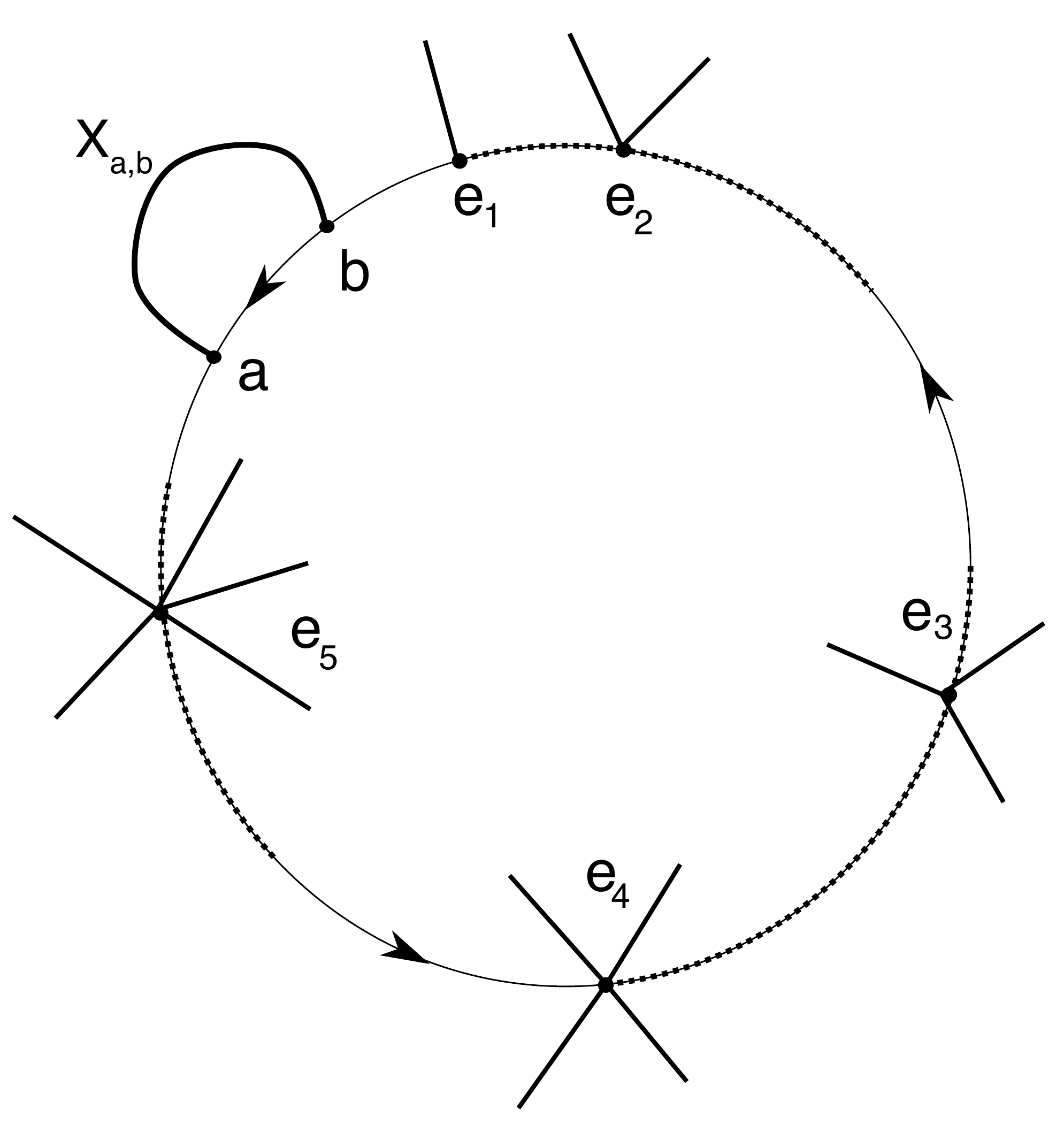}
		\caption{The dynamical system of Example \ref{example JW 2 revised}.}
		\label{Wiseman idea 2}
	\end{figure}
Referring to the compact metric space $X$ constructed in the previous Example \ref{example 2 JW}, let $a,b\in\mathbb{S}^1\setminus K$ be such that the closed interval $[a,b]\subset\mathbb{S}^1$ (obtained by connecting $a$ and $b$ clockwise) does not intersect $K$. Let 
$$X'=X\cup X_{a,b}$$
where $X_{a,b}$ is an arc (disjoint from $X$) from $a$ to $b$.\\
\noindent Define the homeomorphism $f:X'\rightarrow X'$ as follows. $f$ fixes $a,b$, $K$, every copy of the interval $I$ and the arc $X_{a,b}$, while $f$ moves all the points in $\mathbb{S}^1\setminus(K\cup\{a,b\})$ counterclockwise (see Figure \ref{Wiseman idea 2}). \\
\noindent In this case
$$\mathcal{CR}(f)=X'\qquad\text{and}\qquad\mathcal{GR}(f)=Fix(f).$$
Essentially the same argument as in Example \ref{example 2 JW} shows that $f$ does not permit $\mathcal{GR}$-full explosions.\\
\noindent However, $f$ does permit $\mathcal{GR}$-explosions. Indeed, for any $\varepsilon > 0$ there exists $g \in Hom(X')$ such that
$$d_{\mathscr{C}^0}(f,g) < \varepsilon$$
and $g$ modifies the dynamics so that on the arc $X_{a,b}$, points move from $a$ to $b$ and thus points of the open interval $(a,b)$ can return to themselves with arbitrary strong chains. This means that
$$\mathcal{GR}(g) = \mathcal{GR}(f) \cup (a,b).$$
\end{esempio}

\section{$\mathcal{GR}$-explosions and cycles} \label{quattro}
\noindent The goal of this section is to explain the relations between explosions and ``cycles'' for the generalized recurrent set. Let $f \in Hom(X)$ be such that $\mathcal{GR}(f) \ne X$. We start by introducing the notions of decomposition and cycle for $\mathcal{GR}(f)$. \\
~\newline
\indent Given a compact, invariant set $L \subseteq X$, we  define by
$$W^s(L) := \{x \in X: \ d(f^k(x),L) \to 0 \text{ as } k \to +\infty \}$$
and
$$W^u(L) := \{x \in X: \ d(f^{-k}(x),L) \to 0 \text{ as } k \to +\infty \}$$ 
the stable and unstable set, respectively, of $L$ for $f$. In particular, we have that $x \in W^s(L)$ if and only if $\omega(x) \subseteq L$ and $x \in W^u(L)$ if and only if $\alpha(x) \subseteq L$. \\
A decomposition of ${\mathcal{GR}}(f)$ is a finite family $L_1, \ldots , L_k$ of compact, invariant, pairwise disjoint sets in $X$
such that
$${\mathcal{GR}}(f) = \bigcup_{i=1}^k L_i.$$
\begin{definizione} \label{cicli}
Let $L_1, \ldots , L_k$ be a decomposition of ${\mathcal{GR}}(f)$. 
\begin{itemize}
\item[$(i)$] Let $i,j \in \{1, \ldots , k\}$. We write $L_{i}  \rightharpoonup L_{j}$ if
$$\left(W^u(L_i) \cap W^s(L_j)\right) \setminus {\mathcal{GR}}(f) \ne \emptyset.$$
Equivalently, there is a point $x \in X$ outside ${\mathcal{GR}}(f)$ whose orbit is going from $L_i$ to $L_j$. 
\item[$(ii)$] We say that $L_{i_1}, \ldots , L_{i_r}$ form an $r \ge 1$ cycle of $\{L_i\}$ if
$$L_{i_1} \rightharpoonup L_{i_2} \rightharpoonup \ldots \rightharpoonup L_{i_r} \rightharpoonup L_{i_1}.$$
\item[$(iii)$] The decomposition $\{L_i\}$ has no cycles if no subset of $\{L_i\}$ forms an $r \ge 1$ cycle. 
\end{itemize}
\end{definizione}
\indent We first establish that the existence of a decomposition of $\mathcal{GR}(f)$ with no cycles prohibits $\mathcal{GR}$-explosions. This theorem generalizes the corresponding result --due to Pugh and Shub-- for the non-wandering set (see Theorem 6.1 in \cite{PS} for flows and Theorem 5.6 in \cite{WEN} for homeomorphisms). Since in the proof we apply the $\mathscr{C}^0$ closing lemma, the ambient space is a compact, topological manifold $M$ with $dim(M) \ge 2$. 
\begin{teorema} \label{teo 3}  Let $f \in Hom(M)$ be defined on a compact topological manifold $M$ of $dim(M) \ge 2$ and such that $\mathcal{GR}(f) \ne M$. If there exists a decomposition of ${\mathcal{GR}}(f)$ without cycles, then $f$ does not permit ${\mathcal{GR}}$-explosions. \end{teorema}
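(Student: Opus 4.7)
My plan is to argue by contradiction, following the classical Pugh--Shub strategy adapted to $\mathcal{GR}$, with a filtration argument as the main tool. Suppose $\mathcal{GR}(f) = L_1 \sqcup \cdots \sqcup L_k$ admits no cycles and that $f$ permits $\mathcal{GR}$-explosions, so there exist an open neighborhood $U \supset \mathcal{GR}(f)$, a sequence $(g_n)_{n\in\mathbb{N}}\subset Hom(M)$ converging uniformly to $f$, and points $y_n \in \mathcal{GR}(g_n) \setminus U$.

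The crucial step is to produce, from the no-cycles hypothesis, a \emph{filtration adapted to} $U$: compact sets $\emptyset = M_0 \subset M_1 \subset \cdots \subset M_k = M$ with $f(M_i) \subset \mathrm{int}(M_i)$ for $1 \le i \le k-1$, and closed ``levels'' $N_i := M_i \setminus \mathrm{int}(M_{i-1})$ covering $M$ and overlapping only on their boundaries, with $N_i \subset U$ and $L_i$ the maximal $f$-invariant subset of $N_i$. By no cycles one orders the $L_j$'s so that $L_i \rightharpoonup L_j$ forces $i<j$, and then iteratively builds attracting neighborhoods of the down-sets $L_1,\ L_1\cup L_2,\ldots\,$; most conveniently, a Conley-type continuous Lyapunov function $u:M\to\mathbb{R}$ for $f$ taking a distinct constant value on each $L_i$ and strictly decreasing along orbits outside $\mathcal{GR}(f)$ yields the $M_i$ as sublevel sets at appropriate regular values, and by shrinking one arranges $N_i \subset U$.

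Next, the trapping $f(M_i) \subset \mathrm{int}(M_i)$ is $\mathscr{C}^0$-robust: by compactness there is $\delta_i>0$ with $d(f(M_i), M \setminus \mathrm{int}(M_i)) > \delta_i$, so $g(M_i) \subset \mathrm{int}(M_i)$ whenever $d_{\mathscr{C}^0}(f,g) < \delta_i$, and the dual inclusion $g^{-1}(M \setminus \mathrm{int}(M_{i-1})) \subset M \setminus M_{i-1}$ follows from $g(M_{i-1}) \subset \mathrm{int}(M_{i-1})$. I would then show $\mathcal{CR}(g) \subset \bigcup_i N_i$: for $\varepsilon$ smaller than every $\delta_i$, an $\varepsilon$-chain of $g$ starting in $M_i$ remains in $M_i$ step by step (each image $g(x_j)$ lies at distance larger than $\delta_i$ from $M \setminus M_i$ while $d(g(x_j), x_{j+1}) < \varepsilon$), and the reversed chain, which is a chain of $g^{-1}$, is trapped in $\overline{M \setminus M_{i-1}}$ by the dual inclusion; hence any $\varepsilon$-chain from $x \in N_i$ back to itself is confined to $N_i$, and $x$ belongs to the maximal $g$-invariant subset of $N_i\subset U$.

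Combining the inclusions yields $\mathcal{GR}(g) \subseteq \mathcal{CR}(g) \subseteq \bigcup_i N_i \subset U$ for every $g$ sufficiently close to $f$, contradicting $y_n \in \mathcal{GR}(g_n) \setminus U$. The main obstacle I expect is the filtration construction: promoting a purely orbit-based no-cycles condition on the decomposition of $\mathcal{GR}(f)$ to a genuine attracting family $\{M_i\}$ relies on the inclusion $\mathcal{NW}(f) \subseteq \mathcal{GR}(f)$ to rule out hidden recurrence escaping the decomposition, and demands some care in producing the Lyapunov function for a mere homeomorphism on a topological manifold, rather than in the smooth setting where Conley's construction is standard.
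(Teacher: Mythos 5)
Your second half is fine: given compact sets $M_0\subset M_1\subset\cdots\subset M_k=M$ with $f(M_i)\subset \mathrm{int}(M_i)$ and levels $N_i=M_i\setminus \mathrm{int}(M_{i-1})\subset U$, the trapping is indeed $\mathscr{C}^0$-robust and the confinement of $\varepsilon$-chains of nearby $g$ to the levels is standard, giving $\mathcal{GR}(g)\subseteq\mathcal{CR}(g)\subseteq\bigcup_i N_i\subset U$. The genuine gap is exactly the step you flag as the main obstacle, and it is not a technical detail but essentially the whole theorem. The Lyapunov function you invoke --- continuous, constant with distinct values on each $L_i$, strictly decreasing off $\mathcal{GR}(f)$ --- is not available off the shelf. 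Conley's complete Lyapunov function is constant on the chain-transitive components of $\mathcal{CR}(f)$ and decreases strictly only off $\mathcal{CR}(f)$, so its sublevel sets trap chains near $\mathcal{CR}(f)$, which may be much larger than $\mathcal{GR}(f)$ (this would only reprove Theorem \ref{teo 1}). The Fathi--Pageault function of Theorem \ref{generalized} decreases strictly off $\mathcal{GR}(f)$ but is in no way constant on the pieces of an arbitrary decomposition: each $L_i$ may contain many orbits on which it takes different values, so its level structure need not separate the $L_i$. More fundamentally, a filtration with $\bigcup_i N_i\subset U$ for every neighborhood $U$ of $\mathcal{GR}(f)$ forces $\mathcal{CR}(f)=\mathcal{GR}(f)$; but the no-cycle hypothesis is a condition on actual connecting orbits ($W^u\cap W^s$), whereas the existence of an attracting neighborhood for each down-set of the order is a condition on chains. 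Upgrading ``no orbit-cycles'' to ``no chain-cycles'' is precisely where the $\mathscr{C}^0$ closing lemma and the hypothesis $\dim M\ge 2$ must enter (compare Proposition \ref{eccola} and Theorem \ref{altro verso con aperti}); your argument uses neither the closing lemma nor the manifold structure anywhere, which is the telltale sign that the filtration cannot be produced by the purely orbit-theoretic means you describe. There is also a secondary slip: even granting the adapted Lyapunov function, sublevel-set levels $N_i$ contain entire connecting orbits and so are not contained in $U$; one must pass to a fine filtration by replacing $N_i$ with $\bigcap_{|j|\le m}f^j(N_i)$ (using that the maximal invariant subset of $N_i$ equals $L_i$, which does follow from the absence of $1$-cycles), since merely ``shrinking'' a sublevel set destroys positive invariance.

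For comparison, the paper's proof avoids filtrations altogether and runs the Pugh--Shub/Wen orbit-tracking argument: the closing lemma converts points of $\mathcal{GR}(h_n)\setminus U$ into periodic points $a_n$ of nearby $g_n$ with periods $k_n\to+\infty$, and a limit point $b_1\notin\mathcal{GR}(f)$ of these periodic orbits is shown, by repeatedly extracting exit points from isolating neighborhoods $U_{i_1},U_{i_2},\dots$, to generate connecting orbits $b_1,b_2,b_3,\dots$ visiting pairwise distinct pieces $L_{i_0},L_{i_1},L_{i_2},\dots$; the no-cycle hypothesis forbids repetitions, so the chain would exceed length $k$, a contradiction. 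If you want to salvage your route, you would need to first prove that the no-cycle hypothesis yields an adapted filtration, which on a manifold of dimension at least two again goes through the closing lemma.
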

\noindent The proof of Theorem \ref{teo 3} uses the same techniques of Pugh-Shub and is postponed to Appendix \ref{APP1}. \\
\noindent In order to establish the converse of Theorem \ref{teo 3}, we need to introduce the notion of cycle for a so-called open decomposition of $f \in Hom(X)$ with $\mathcal{GR}(f) \ne X$. \\
~\newline
\indent Given an open set $V \subset X$, we define
\begin{equation} \label{instabile}
V^{u} := \{x \in X : \ \exists\  m\ge 0 \text{ such that }f^{-m}(x) \in V \} = \bigcup_{m \ge 0} f^{m}(V).
\end{equation}
An open decomposition of $f$ is a finite family $V_1, \ldots , V_k$ of open sets in $X$, with pairwise disjoint closures,
such that
$$\mathcal{GR}(f) \subseteq \bigcup_{i=1}^k V_i.$$
\begin{definizione}\label{definizione cicli aperti}
Let $V_1, \ldots , V_k$ be an open decomposition of $f$. 
\begin{itemize}
\item[$(i)$] Let $i,j \in \{1, \ldots , k\}$, $i \ne j$. We write $V_{i}  \ge V_{j}$ if
$$V_j \cap V_i^u \ne \emptyset.$$
Equivalently, there are $x \in V_j$ and $m \ge 0$\footnote{Actually, $m>0$ since the open sets $\{V_i\}$ have pairwise disjoint closures.} such that $f^{-m}(x) \in V_i$.
\item[$(ii)$] We say that $V_{i_1}, \ldots , V_{i_r}$ form an $r > 1$ cycle of $\{V_i\}$ if
$$V_{i_1} \ge V_{i_2} \ge \ldots \ge V_{i_r} \ge V_{i_1}.$$
We say that $V_j$ forms a $1$ cycle of $\{V_i\}$ if there are $x \notin V_j$ and $m, q > 0$ such that
$$f^m(x), f^{-q}(x) \in V_j.$$ 
\item[$(iii)$] The open decomposition $\{V_i\}$ has no cycles if no subset of $\{V_i\}$ forms an $r \ge 1$ cycle. 
\end{itemize}
\end{definizione}
\indent By using the above formalism, we prove the other implication of Theorem \ref{teo 3}.
\begin{teorema} \label{altro verso} Let $f \in Hom(M)$ be defined on a compact topological manifold $M$ of $dim(M) \ge 2$ and such that $\mathcal{GR}(f) \ne M$. If $f$ does not permit $\mathcal{GR}$-explosions, then there exists a decomposition of $\mathcal{GR}(f)$ with no cycles. 
\end{teorema}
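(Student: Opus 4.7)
Our strategy mirrors the Shub--Smale proof for the non-wandering set (Lemma 2 in \cite{SS}) via the open-decomposition formalism of Definition \ref{definizione cicli aperti}. The plan is: (i) construct an open decomposition $\{V_1,\dots,V_k\}$ of $f$ with no cycles in the sense of Definition \ref{definizione cicli aperti}, and then (ii) transfer this to the closed decomposition $L_i := \mathcal{GR}(f)\cap\overline{V_i}$ of $\mathcal{GR}(f)$ and verify it has no cycles in the sense of Definition \ref{cicli}.

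For step (i), by Proposition \ref{eccola} the no-$\mathcal{GR}$-explosion hypothesis forces $\mathcal{GR}(f) = \mathcal{CR}(f)$. Fix an arbitrary open neighborhood $U$ of $\mathcal{GR}(f)$ and cover $\mathcal{GR}(f)$ by finitely many small open sets $V_1,\dots,V_k\subset U$ with pairwise disjoint closures (possible since $M$ is a manifold and $\mathcal{GR}(f)$ is compact), arranging that each chain component of $\mathcal{CR}(f)=\mathcal{GR}(f)$ lies entirely inside a single $V_i$ (using the Lyapunov function provided by Theorem \ref{generalized}, which is constant on chain components). We claim such a cover has no open cycles. Indeed, a cycle $V_{i_1}\ge\cdots\ge V_{i_r}\ge V_{i_1}$ would, by Definition \ref{definizione cicli aperti}, supply orbit segments $f^{-m_j}(z_j)\in V_{i_j}$, $z_j\in V_{i_{j+1}}$, which, concatenated with small jumps inside the $V_{i_j}$'s, produce an arbitrarily fine pseudo-orbit from a base point back to itself. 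By refining the cover sufficiently, at least one intermediate iterate $y$ of this pseudo-orbit is forced outside $U$; since $y\in\mathcal{CR}(f)$, Lemma \ref{C0} provides a perturbation $g$ of $f$, arbitrarily close in $Hom(M)$, with $y\in Per(g)\subseteq\mathcal{GR}(g)$, contradicting the no-explosion hypothesis applied to $U$.

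Once step (i) is established, step (ii) is routine. The $L_i$'s are pairwise disjoint (from disjoint closures of the $V_i$'s), compact, and cover $\mathcal{GR}(f)$; invariance follows from the chain-component refinement, since any $f$-orbit in $\mathcal{GR}(f)$ remains inside a single chain component and therefore inside a single $\overline{V_i}$. A hypothetical closed cycle $L_{i_1}\rightharpoonup\cdots\rightharpoonup L_{i_r}\rightharpoonup L_{i_1}$ would supply points $x_j\notin\mathcal{GR}(f)$ with $\alpha(x_j)\subseteq L_{i_j}\subseteq\overline{V_{i_j}}$ and $\omega(x_j)\subseteq L_{i_{j+1}}\subseteq\overline{V_{i_{j+1}}}$, so that forward iterates of $x_j$ enter $V_{i_{j+1}}$ and backward iterates enter $V_{i_j}$; this yields $V_{i_j}\ge V_{i_{j+1}}$ for all $j$, hence an open cycle, contradicting step (i). The principal difficulty is step (i), namely forcing the pseudo-orbit produced by a putative open cycle to contain an iterate outside the preassigned neighborhood $U$; this requires a careful refinement of the cover so that a cycle cannot be trivially absorbed into a neighborhood of a single chain component, exploiting the equality $\mathcal{GR}(f)=\mathcal{CR}(f)$ together with the Lyapunov structure of Theorem \ref{generalized}.
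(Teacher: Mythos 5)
Your overall architecture (open decompositions, a closing-lemma perturbation, the no-explosion hypothesis applied to a fixed neighborhood $U$) is the right one and matches the paper's, which reduces the statement to Theorem \ref{altro verso con aperti} and proves that in Appendix \ref{APP}. But step (i), which you yourself flag as the principal difficulty, is exactly where the proposal breaks, and the mechanism you propose cannot work as stated. First, the concatenated pseudo-orbit extracted from a putative open cycle is not arbitrarily fine: the jumps occur inside the sets $V_{i_j}$, between the terminal point of one genuine orbit segment and the initial point of the next, and these two points are only known to lie in the same $V_{i_j}$. Since finitely many $V_i$ with pairwise disjoint closures must each absorb whole chain components (possibly uncountably many, accumulating on one another, and of large diameter), $\mathrm{diam}(V_{i_j})$ does not shrink under refinement, and the two points to be joined are not themselves in $\mathcal{CR}(f)$, so no fine chain between them is available. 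Second, and decisively, your intended contradiction is internally inconsistent: having invoked Proposition \ref{eccola} to get $\mathcal{GR}(f)=\mathcal{CR}(f)$, any point $y\in\mathcal{CR}(f)$ lies in $\mathcal{GR}(f)\subseteq U$, so it cannot simultaneously be the point ``forced outside $U$''; conversely, if $y\notin U$ then $y\notin\mathcal{CR}(f)$ and Lemma \ref{C0} does not apply to it. What is actually needed is a closing lemma for an entire chain of balls rather than for a single chain-recurrent point: one must produce $g$ close to $f$ for which some point \emph{outside} $U$, lying on a connecting orbit segment and not in $\mathcal{CR}(f)$, becomes periodic, whence $Per(g)\subseteq\mathcal{GR}(g)\subset U$ gives the contradiction. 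That is the paper's Lemma \ref{tecnico} (after Nitecki--Shub and Lemma 2 of \cite{SS}), and it carries essentially all of the technical weight; your proposal leaves it as an assertion.

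Two further points. The transfer in step (ii) is fine for $r>1$ but silently fails for $1$-cycles: a point $x\notin\mathcal{GR}(f)$ with $\alpha(x)\cup\omega(x)\subseteq L_{i_1}$ may have its entire orbit inside $V_{i_1}$, producing no open $1$-cycle in the sense of Definition \ref{definizione cicli aperti}; the paper avoids this by saturating the open sets ($V_i:=\tilde{U}_i^s\cap\tilde{U}_i^u$) and running the periodic-point argument on points of $V_i\setminus U$. Finally, once you have $\mathcal{GR}(f)=\mathcal{CR}(f)$ from Proposition \ref{eccola}, a much shorter correct route is available and bypasses your step (i) entirely: by Remark \ref{rem CR}, the chain recurrent set of a homeomorphism of a compact metric space always admits a decomposition with no cycles (via a complete Lyapunov function), and since $\mathcal{GR}(f)=\mathcal{CR}(f)$ this is already a no-cycle decomposition of $\mathcal{GR}(f)$. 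If you wish to salvage your write-up without reproducing Appendix \ref{APP}, that is the argument to make precise.
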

\noindent The above theorem is a straightforward consequence of the next one. 
\begin{teorema} \label{altro verso con aperti}Let $f \in Hom(M)$ be defined on a compact topological manifold $M$ of $dim(M) \ge 2$ and such that $\mathcal{GR}(f) \ne M$. If $f$ does not permit ${\mathcal{GR}}$-explosions, then for any open neighborhood $U$ of ${\mathcal{GR}}(f)$ there exists an open decomposition $V_1, \ldots , V_k$ of $f$ with no cycles such that
$${\mathcal{GR}}(f) \subset \bigcup_{i=1}^k V_i \subseteq U.$$
\end{teorema}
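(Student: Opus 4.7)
My plan is to reduce to the chain-recurrent setting via Proposition \ref{eccola} and then build the required open decomposition from level sets of a Conley-type complete Lyapunov function. Since $f \in Hom(M)$ does not permit $\mathcal{GR}$-explosions and $\dim M \ge 2$, Proposition \ref{eccola} yields $\mathcal{GR}(f) = \mathcal{CR}(f)$. I fix a continuous Lyapunov function $L \colon M \to \mathbb{R}$ for $f$ that is strictly decreasing along orbits off $\mathcal{CR}(f)$, constant on each chain component, and whose restriction to $\mathcal{CR}(f)$ has compact nowhere-dense image in $\mathbb{R}$; such an $L$ is provided by Conley's fundamental theorem for homeomorphisms on compact metric spaces.

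Given the open neighborhood $U \supset \mathcal{GR}(f)$, I choose finitely many values $c_0 < c_1 < \cdots < c_N$ outside $L(\mathcal{CR}(f))$ with $c_0 < \min L(\mathcal{CR}(f))$, $c_N > \max L(\mathcal{CR}(f))$, and consecutive spacing fine enough that each nonempty compact invariant piece $K_j := \mathcal{CR}(f) \cap L^{-1}((c_{j-1}, c_j))$ admits an open neighborhood $V_j$ with $V_j \subset L^{-1}((c_{j-1}+\eta, c_j-\eta)) \cap U$ for some small $\eta > 0$; such $V_j$'s are produced by a finite-subcover compactness argument on each $K_j$. By construction the closures $\overline{V_j}$ lie in pairwise disjoint closed $L$-slabs and are therefore pairwise disjoint, and $\mathcal{GR}(f) = \bigsqcup_j K_j \subset \bigcup_j V_j \subseteq U$.

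The no-cycle verification splits in two. Cycles of length $r \ge 2$ are excluded by Lyapunov monotonicity: each relation $V_{i_s} \ge V_{i_{s+1}}$ supplies $y_s \in V_{i_{s+1}}$ with $f^{-m_s}(y_s) \in V_{i_s}$, and the inequality $L(y_s) \le L(f^{-m_s}(y_s))$ combined with the slab separation forces the slab index to increase strictly along any such chain, which is incompatible with a cyclic return. The $1$-cycle case, namely $x \notin V_j$ with $f^m(x), f^{-q}(x) \in V_j$, is the main obstacle: the sandwich $L(f^m(x)) \le L(x) \le L(f^{-q}(x))$ places $L(x)$ in the $L$-range of slab $j$, so if $x \in \mathcal{CR}(f)$ then $x \in K_j \subset V_j$, a contradiction; the delicate remaining case is $x \notin \mathcal{CR}(f)$ sitting in the slab but outside the geometric thickening $V_j$. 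This is where the $\dim M \ge 2$ assumption enters: combining the orbit segment from $f^{-q}(x)$ through $x$ to $f^m(x)$ with chains inside the chain-recurrent piece $K_j$ produces pseudo-orbits through $x$, which the $\mathscr{C}^0$ closing lemma (Lemma \ref{C0}) converts into a genuine periodic orbit of some small $\mathscr{C}^0$-perturbation $g$ of $f$; refining the $V_j$'s so that such a problematic $x$ would lie outside $U$ yields $\mathcal{GR}(g) \not\subset U$, contradicting the no-$\mathcal{GR}$-explosion hypothesis and hence excluding $1$-cycles.
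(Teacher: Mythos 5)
Your reduction via Proposition \ref{eccola} to $\mathcal{GR}(f)=\mathcal{CR}(f)$ and the exclusion of $r>1$ cycles by Lyapunov monotonicity across disjoint slabs are both correct. The gap is the $1$-cycle case, and it is fatal as written. First, the slab decomposition really can have $1$-cycles: already for a north--south map on $\mathbb{S}^2$ ($N$ a repeller, $S$ an attractor, $\mathcal{GR}(f)=\mathcal{CR}(f)=\{N,S\}$, no explosions), if a single slab $(c_{j-1},c_j)$ contains both $L(N)$ and $L(S)$ then any connecting orbit furnishes $x\notin V_j$ with $f^{-q}(x),f^m(x)\in V_j$. Your stated fineness condition only guarantees $V_j\subset U$ and containment in the slab; it does not force distinct chain components joined by connecting orbits into distinct slabs, and when $\mathcal{CR}(f)$ has infinitely many chain components (so that $L(\mathcal{CR}(f))$ has accumulation points) no finite slicing can achieve that separation. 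Second, the proposed rescue cannot work: the problematic $x$ satisfies $x\notin\mathcal{CR}(f)$, so Lemma \ref{C0} does not apply to it, and there is no reason a chain exists from the forward end $f^m(x)$ (near one chain component of $K_j$) back to the backward end $f^{-q}(x)$ (near a different one). More decisively, by Theorem \ref{teo 1} (applicable since $\mathcal{CR}(f)=\mathcal{GR}(f)\neq M$), every $g$ sufficiently $\mathscr{C}^0$-close to $f$ has $\mathcal{CR}(g)$ contained in a neighborhood of $\mathcal{CR}(f)$ avoiding $x$, hence $x\notin\mathcal{GR}(g)$ for all such $g$; no small perturbation can make $x$ recurrent or periodic, so no contradiction with the no-explosion hypothesis can be extracted from this $x$.

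This is exactly the point where the paper's proof does real work. There the covering balls are first merged along cycles and the resulting sets $\tilde U_i$ are replaced by $V_i:=\tilde U_i^s\cap\tilde U_i^u$, which kills $1$-cycles by construction (any $x$ whose orbit meets $\tilde U_i$ in both time directions is absorbed into $V_i$); the price is that $V_i\subseteq U$ is no longer automatic, and that containment is precisely what the no-explosion hypothesis buys, via the closing construction of Lemma \ref{tecnico} applied to a $\mathcal{GR}$-chain of balls joining $y=f^m(x)$ to $z=f^{-q}(x)$ through genuinely chain-recurrent points. Your approach tries to get the containment in $U$ for free and pay for it in the no-cycle verification, but that debt cannot be settled with the closing lemma because the obstructing points are not chain recurrent. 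If you saturate your slab neighborhoods to remove the $1$-cycles, you are led back to the paper's argument.
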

\noindent Since the proof of Theorem \ref{altro verso con aperti} follows the same lines of the proof of Lemma 2 in \cite{SS}, it is postponed to Appendix \ref{APP}. \\
~\newline
\noindent\textit{Proof of Theorem \ref{altro verso}.} Arguing by contradiction, we suppose that every decomposition $L_1, \ldots , L_k$ admits an $r \ge 1$ cycle
$$L_{i_1} \rightharpoonup L_{i_2} \rightharpoonup \ldots \rightharpoonup L_{i_r} \rightharpoonup L_{i_1}.$$
This means that there are $x_1, \ldots , x_r \notin {\mathcal{GR}}(f)$ such that
$$\alpha(x_{j}) \subseteq L_{i_j}, \qquad \omega(x_{j}) \subseteq L_{i_{j+1}} \qquad \forall j \in \{1, \ldots r-1\}$$
and
$$\alpha(x_{r}) \subseteq L_{i_r}, \qquad \omega(x_{r}) \subseteq L_{i_{1}}.$$
Since the $\{ L_i \}$ are compact and pairwise disjoint, we can choose a family $\{\mathcal{L}_i\}$ of open sets, with pairwise disjoint closures such that $L_i \subset \mathcal{L}_i$ for any $i = 1, \ldots , k$. Let us consider the open neighborhood  
$$U := \bigcup_{i=1}^{k} \mathcal{L}_i.$$ 
of ${\mathcal{GR}}(f)$. By shrinking each $\mathcal{L}_i$ a bit, we can assume that the points $x_1, \ldots , x_r \notin U$.  Then, by construction, any open decomposition $\{V_i\}$ of $f$ such that $\mathcal{GR}(f)\subset \bigcup_i V_i\subseteq U$ has cycles. This fact contradicts Theorem \ref{altro verso con aperti} applied to $U$ and concludes the proof. \hfill $\Box$ \\
~\newline
Finally, Theorems \ref{teo 3} and \ref{altro verso} give us the equivalence between no ${\mathcal{GR}}$-explosions and the existence of a decomposition of $\mathcal{GR}(f)$ with no cycles.
\begin{teorema}\label{corollario iff} Let $f \in Hom(M)$ be defined on a compact topological manifold $M$ of $dim(M) \ge 2$ and such that $\mathcal{GR}(f) \ne M$. $f$ does not permit ${\mathcal{GR}}$-explosions if and only if there exists a decomposition of ${\mathcal{GR}}(f)$ with no cycles. 
\end{teorema}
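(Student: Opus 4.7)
The plan is to observe that this theorem is simply the combination of the two implications already established earlier in the section, namely Theorem \ref{teo 3} and Theorem \ref{altro verso}. No new work is required beyond citing these two results, so the proof will consist of assembling the two directions of the equivalence.

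For the forward direction, I would invoke Theorem \ref{altro verso}: assuming $f$ does not permit $\mathcal{GR}$-explosions, that theorem directly provides a decomposition of $\mathcal{GR}(f)$ with no cycles. For the converse, I would invoke Theorem \ref{teo 3}: if there exists a decomposition of $\mathcal{GR}(f)$ with no cycles, then $f$ does not permit $\mathcal{GR}$-explosions. The hypotheses on $M$ (compact topological manifold with $\dim(M) \ge 2$) and on $f$ (with $\mathcal{GR}(f) \ne M$) match exactly those of both Theorem \ref{teo 3} and Theorem \ref{altro verso}, so no additional assumptions or reductions are needed.

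There is essentially no obstacle here; the entire content of the equivalence is packaged in Theorems \ref{teo 3} and \ref{altro verso}, whose nontrivial proofs are deferred to Appendix \ref{APP1} and follow by reduction to Theorem \ref{altro verso con aperti} (plus the discussion immediately above) respectively. The only minor point worth flagging is that the notion of cycle used in the two directions is the one from Definition \ref{cicli} (cycles of a decomposition of $\mathcal{GR}(f)$ into compact invariant sets), not the notion from Definition \ref{definizione cicli aperti} (cycles of an open decomposition), which is only an intermediate tool used inside the proof of Theorem \ref{altro verso}. Thus the write-up will be a two-line proof: one sentence per implication, each citing the relevant theorem.
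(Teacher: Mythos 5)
Your proposal is correct and matches the paper exactly: the paper states Theorem \ref{corollario iff} as the immediate combination of Theorem \ref{teo 3} (no cycles implies no $\mathcal{GR}$-explosions) and Theorem \ref{altro verso} (no $\mathcal{GR}$-explosions implies a cycle-free decomposition), with the hard work deferred to the appendices. Your remark that the relevant notion of cycle is that of Definition \ref{cicli}, with Definition \ref{definizione cicli aperti} only an intermediate tool, is also accurate.
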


\begin{remark}
We notice that in the proof of the previous theorem --see Appendixes \ref{APP1} and \ref{APP}-- we do not use the full definition of the generalized recurrent set, but only the fact that $\mathcal{GR}(f)$ is a compact, invariant set, which contains $\mathcal{NW}(f)$ and is contained in $\mathcal{CR}(f).$ Consequently, with the same proof, we can obtain the corresponding results both for $\mathcal{SCR}(f)$ and $\mathcal{CR}(f)$.
\end{remark}
\begin{teorema} \label{teo 4}  Let $f \in Hom(M)$ be defined on a compact topological manifold $M$ of $dim(M) \ge 2$ and such that $\mathcal{SCR}(f) \ne M$. $f$ does not permit ${\mathcal{SCR}}$-explosions if and only if there exists a decomposition of ${\mathcal{SCR}}(f)$ with no cycles.  
\end{teorema}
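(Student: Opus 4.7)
The plan is to invoke the Remark immediately preceding Theorem \ref{teo 4}, which observes that the proof of Theorem \ref{corollario iff} for $\mathcal{GR}(f)$ makes no use of the full definition of the generalized recurrent set but only of four structural properties of it: compactness, invariance, containing $\mathcal{NW}(f)$, and being contained in $\mathcal{CR}(f)$. Consequently the entire argument transfers verbatim to any closed invariant set sandwiched between $\mathcal{NW}(f)$ and $\mathcal{CR}(f)$, and the task reduces to verifying that $\mathcal{SCR}(f)$ is such a set.

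First I would check that $\mathcal{SCR}(f)$ enjoys all four properties. It is closed (as recalled in the preliminaries, citing \cite{FP15}), hence compact in $M$; it is invariant; and the chain $\mathcal{NW}(f) \subseteq \mathcal{GR}(f) \subseteq \mathcal{SCR}(f) \subseteq \mathcal{CR}(f)$ is also recalled in the preliminaries. Observe moreover that Definitions \ref{cicli} and \ref{definizione cicli aperti} (decomposition, cycle, open decomposition) depend only on compactness and invariance of the underlying set, together with the intrinsic notions $W^s(L)$, $W^u(L)$, $\alpha(x)$, $\omega(x)$. Hence their natural analogues for $\mathcal{SCR}(f)$ are well-defined without modification, and the statement of Theorem \ref{teo 4} is literally meaningful.

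Second, I would transcribe the two directions of Theorem \ref{corollario iff} with $\mathcal{GR}$ replaced everywhere by $\mathcal{SCR}$. The forward direction (no cycles $\Rightarrow$ no $\mathcal{SCR}$-explosions) is the analogue of Theorem \ref{teo 3}; the proof in Appendix \ref{APP1} applies the $\mathscr{C}^0$ closing lemma (Lemma \ref{C0}) to approximate chain recurrent points of nearby homeomorphisms $g$ by periodic ones, and those periodic points lie in $\mathcal{NW}(g) \subseteq \mathcal{SCR}(g)$ --- which is precisely where the inclusion $\mathcal{NW} \subseteq \mathcal{SCR}$ is used. The converse (no $\mathcal{SCR}$-explosions $\Rightarrow$ no cycles) is the analogue of Theorem \ref{altro verso}, obtained by the same contradiction scheme that reduces to Theorem \ref{altro verso con aperti}; its proof in Appendix \ref{APP} manipulates only open neighborhoods and chain orbits, invoking the inclusion $\mathcal{SCR} \subseteq \mathcal{CR}$ to control the latter.

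The main obstacle --- really the only non-trivial bookkeeping --- is to verify carefully that no step in Appendixes \ref{APP1} and \ref{APP} secretly exploits a property of $\mathcal{GR}(f)$ beyond the four listed, in particular that the Lyapunov-function characterization (Theorem \ref{generalized}) and the intersection-over-metrics formula \eqref{terza} are never used. Once this verification is carried out, the identical proof, with $\mathcal{GR}$ uniformly replaced by $\mathcal{SCR}$, establishes Theorem \ref{teo 4}.
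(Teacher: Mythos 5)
Your proposal is correct and is essentially the paper's own argument: the paper proves Theorem \ref{teo 4} precisely by the remark that the proofs in Appendixes \ref{APP1} and \ref{APP} use only that the set is compact, invariant, contains $\mathcal{NW}(f)$ and is contained in $\mathcal{CR}(f)$, all of which hold for $\mathcal{SCR}(f)$. (Only a minor bookkeeping slip: the inclusion $\mathcal{NW}\subseteq\mathcal{SCR}$ is needed where $\alpha$- and $\omega$-limit sets are placed inside the decomposition in Appendix \ref{APP1} and where $Per(g)\subseteq\mathcal{SCR}(g)$ is invoked in Appendix \ref{APP}, not at the closing-lemma step itself, but this does not affect the validity of the transfer.)
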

\begin{remark} \label{rem CR}
The corresponding result holds for $\mathcal{CR}(f)$; since $\mathcal{CR}$-explosions cannot occur (see Theorem \ref{teo 1}), we see that for $f \in Hom(M)$  defined on a compact topological manifold $M$ of $dim(M) \ge 2$, there exists a decomposition of $\mathcal{CR}(f)$ with no cycles.  In fact, this holds for compact metric spaces, as a consequence of the existence of a complete Lyapunov function.
\end{remark}

%%% %%%
{\section{Applications to strict Lyapunov functions and genericity} \label{cinque}

Let $X$ be a compact metric space and $f \in Hom(X)$. We say that a continuous Lyapunov function $u:X\rightarrow\R$ for $f$ is strict if it is not a first integral. Equivalently, this means that $\mathcal{N}(u) \ne X$, i.e.\ there exists $x \in X$ such that
$$u(f(x)) < u(x).$$
By Theorem \ref{generalized}, $f$ admits a continuous strict Lyapunov function if and only if $\mathcal{GR}(f) \ne X$. In this section, we collect the results of Sections \ref{tre} and \ref{quattro} in order to give conditions to give an affirmative answer to this question: \\
~\newline
\textit{For a given homeomorphism, is the property of admitting a continuous strict Lyapunov function stable under $\mathscr{C}^0$ perturbations?} \\
~\newline
\noindent From Corollary \ref{COR UNO}, Proposition \ref{top stab imply no GR exp} and Proposition \ref{anna}, we deduce the following
\begin{proposizione}\label{Lyapunov stability}
Let $f\in Hom(X)$ admit a continuous strict Lyapunov function. Suppose that one of these hypotheses holds: 
\begin{itemize}
\item[$(i)$] $\mathcal{GR}(f) = \mathcal{CR}(f)$;
    \item[$(ii)$] $\mathcal{CR}(f)$ is strictly contained in $X$;
    \item[$(iii)$] $f$ is topologically stable.
    \end{itemize} 
Then there exists a neighborhood $V$ of $f$ in $Hom(X)$ such that any $g \in V$ admits a continuous strict Lyapunov function.    
\end{proposizione}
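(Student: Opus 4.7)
The plan is to translate the Lyapunov statement into a $\mathcal{GR}$-full explosion statement using Theorem \ref{generalized}, then invoke the three results cited just before the proposition, one for each hypothesis. Recall that by Theorem \ref{generalized}, a homeomorphism $g \in Hom(X)$ admits a continuous strict Lyapunov function if and only if $\mathcal{GR}(g) \ne X$. Hence the proposition reduces to showing: under each of the hypotheses (i), (ii), (iii), there is a neighborhood $V$ of $f$ in $Hom(X)$ such that $\mathcal{GR}(g) \ne X$ for every $g \in V$, i.e.\ $f$ does not permit $\mathcal{GR}$-full explosions.

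Case (ii) is the most direct one: if $\mathcal{CR}(f) \ne X$ then Proposition \ref{anna} gives immediately the absence of $\mathcal{GR}$-full explosions, hence the desired neighborhood $V$. Cases (i) and (iii) both yield the stronger statement that $f$ does not permit $\mathcal{GR}$-explosions — by Corollary \ref{COR UNO} under hypothesis (i), and by Proposition \ref{top stab imply no GR exp} under hypothesis (iii). I will then use the fact recorded just after Definition \ref{la seconda}: no $\mathcal{GR}$-explosions implies no $\mathcal{GR}$-full explosions. To make this explicit, one picks, using that $\mathcal{GR}(f) \ne X$ and that $X$ is a compact metric space, an open neighborhood $U$ of $\mathcal{GR}(f)$ with $U \ne X$ (for instance, take a point $x_0 \notin \mathcal{GR}(f)$, pick a small closed ball around $x_0$ disjoint from $\mathcal{GR}(f)$ by closedness of $\mathcal{GR}(f)$, and let $U$ be its complement in $X$ intersected with $X$). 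The no $\mathcal{GR}$-explosion property then furnishes a neighborhood $V$ of $f$ with $\mathcal{GR}(g) \subset U \subsetneq X$ for all $g \in V$, so $\mathcal{GR}(g) \ne X$ and therefore $g$ admits a continuous strict Lyapunov function.

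There is no real obstacle here: the work has already been done in Sections \ref{tre}, and the only thing to verify carefully is the trivial reduction from the Lyapunov formulation to the $\mathcal{GR}$-full explosion formulation via Theorem \ref{generalized}. The proof is thus a three-line bookkeeping argument listing the three hypotheses and citing the corresponding result.
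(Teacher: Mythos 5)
Your proposal is correct and follows exactly the paper's (implicit) argument: translate via Theorem \ref{generalized} to the statement that $\mathcal{GR}(g)\ne X$ for all $g$ near $f$, and then cite Corollary \ref{COR UNO}, Proposition \ref{anna} and Proposition \ref{top stab imply no GR exp} for hypotheses $(i)$, $(ii)$, $(iii)$ respectively, using that no $\mathcal{GR}$-explosions implies no $\mathcal{GR}$-full explosions. Nothing is missing; the explicit choice of a proper neighborhood $U$ is a fine way to spell out that last implication.
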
       
\noindent Moreover, from Theorem \ref{corollario iff}, we obtain that a sufficient condition is the existence of a decomposition of $\mathcal{GR}(f)$ without cycles.
\begin{proposizione} \label{per varieta}
Let $f\in Hom(M)$ admit a continuous strict Lyapunov function. If there exists a decomposition of $\mathcal{GR}(f)$ without cycles then there exists a neighborhood $V$ of $f$ in $Hom(M)$ such that any $g \in V$ admits a continuous strict Lyapunov function.
\end{proposizione}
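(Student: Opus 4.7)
The plan is to reduce the proposition to the equivalence already established in Theorem \ref{corollario iff} by translating the Lyapunov formulation into a statement about the generalized recurrent set. By Theorem \ref{generalized}, a homeomorphism $h\in Hom(M)$ admits a continuous strict Lyapunov function if and only if $\mathcal{GR}(h)\ne M$: the ``only if'' direction follows because any strict $u\in\mathscr{L}(h)$ satisfies $\mathcal{GR}(h)\subseteq\mathcal{N}(u)\ne M$, while the ``if'' direction is witnessed by the distinguished Lyapunov function of Theorem \ref{generalized} whose neutral set equals $\mathcal{GR}(h)$. Applying this equivalence to $f$ gives $\mathcal{GR}(f)\ne M$, and applying it to an arbitrary $g$ near $f$ shows that the conclusion of Proposition \ref{per varieta} is equivalent to saying that $f$ does not permit $\mathcal{GR}$-full explosions.

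Next I would invoke Theorem \ref{corollario iff}. Since $\mathcal{GR}(f)\ne M$ and, by hypothesis, $\mathcal{GR}(f)$ admits a decomposition without cycles, that theorem asserts that $f$ does not permit $\mathcal{GR}$-explosions. To pass from no $\mathcal{GR}$-explosions to no $\mathcal{GR}$-full explosions (as already noted immediately after Definition \ref{la seconda}), it suffices to choose a proper open neighborhood of $\mathcal{GR}(f)$. Concretely, pick any $p\in M\setminus \mathcal{GR}(f)$ (which exists since $\mathcal{GR}(f)\ne M$) and set $U:=M\setminus\{p\}$; this is an open neighborhood of $\mathcal{GR}(f)$, because $\mathcal{GR}(f)$ is closed and $M$ is Hausdorff. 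The no-$\mathcal{GR}$-explosion property applied to $U$ yields a neighborhood $V$ of $f$ in $Hom(M)$ such that $\mathcal{GR}(g)\subset U\subsetneq M$ for every $g\in V$, so in particular $\mathcal{GR}(g)\ne M$ for every $g\in V$.

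Combining the two steps finishes the proof: every $g\in V$ satisfies $\mathcal{GR}(g)\ne M$, hence by Theorem \ref{generalized} admits a continuous strict Lyapunov function. I do not expect any substantive obstacle: the statement is essentially a packaging of Theorem \ref{corollario iff} through the Fathi--Pageault identification of strict Lyapunov functions with proper inclusions $\mathcal{GR}(f)\subsetneq M$, and the only minor point is the explicit construction of a proper open neighborhood of $\mathcal{GR}(f)$ to feed into the no-explosion hypothesis, which is immediate from the Hausdorff property of $M$.
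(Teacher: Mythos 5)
Your proposal is correct and follows essentially the same route as the paper: translate strictness of Lyapunov functions into $\mathcal{GR}\ne M$ via Theorem \ref{generalized}, apply Theorem \ref{corollario iff} (really the direction of Theorem \ref{teo 3}) to get no $\mathcal{GR}$-explosions, and then pass to no $\mathcal{GR}$-full explosions by feeding in a proper open neighborhood of $\mathcal{GR}(f)$, exactly as the paper does implicitly (your explicit choice $U=M\setminus\{p\}$ just spells out the remark made after Definition \ref{la seconda}).
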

\indent  We finally remark that the property of admitting a continuous strict Lyapunov function is generic. For this purpose, let $M$ be a smooth, compact manifold with metric $d$. We endow $Hom(M)$ with the metric
$$d_{\mathscr{H}}(f,g) = \max_{x \in M} \left(\max \left[ d(f(x),g(x)) , d(f^{-1}(x),g^{-1}(x))\right]\right).$$
With this metric, $Hom(M)$ is a complete space and therefore it is a Baire space.
A property in $Hom(M)$ is
said to be generic if the set of $f \in Hom(M)$ satisfying this property contains a residual set, i.e. a countable intersection of open dense sets.
\begin{proposizione}\label{Lyapunov genericity} On a smooth, compact manifold $M$, the property in $Hom(M)$ of admitting a continuous strict Lyapunov function is generic. 
\end{proposizione}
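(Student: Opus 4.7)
My plan is to show that $\mathcal{R} := \{f \in Hom(M) : \mathcal{GR}(f) \ne M\}$ contains an open dense subset of $Hom(M)$; since $(Hom(M), d_{\mathscr{H}})$ is Baire, this gives residuality of $\mathcal{R}$, which by Theorem \ref{generalized} coincides with the set of homeomorphisms admitting a continuous strict Lyapunov function.

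The natural candidate open dense subset is $\mathcal{O} := \{f \in Hom(M) : \mathcal{CR}(f) \ne M\}$; the inclusion $\mathcal{O} \subseteq \mathcal{R}$ follows from $\mathcal{GR}(f) \subseteq \mathcal{CR}(f)$. To see that $\mathcal{O}$ is open I would pick $f \in \mathcal{O}$, choose $y \in M \setminus \mathcal{CR}(f)$ and an open neighborhood $W$ of $y$ with $\overline{W} \cap \mathcal{CR}(f) = \emptyset$, and set $U := M \setminus \overline{W}$: then $U$ is an open neighborhood of $\mathcal{CR}(f)$ strictly contained in $M$, and the absence of $\mathcal{CR}$-explosions (Theorem \ref{teo 1}) yields a $d_{\mathscr{H}}$-neighborhood of $f$ on which $\mathcal{CR}(g) \subseteq U \subsetneq M$.

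The heart of the proof is the density of $\mathcal{O}$ in $Hom(M)$: given $f$ and $\varepsilon > 0$, I would construct $g$ with $d_{\mathscr{H}}(f, g) < \varepsilon$ admitting a proper attracting periodic orbit (which forces $\mathcal{CR}(g) \ne M$). Here I would exploit compactness of $M$: for arbitrarily small $\delta > 0$ there exist $p \in M$ and $N \geq 1$ with $d(f^N(p), p) < \delta$, and, possibly replacing $N$ by the smallest such integer, one may assume that $p, f(p), \ldots, f^{N-1}(p)$ are pairwise distinct. In disjoint smooth charts around these points I would perform two local modifications. First, a perturbation whose $d_{\mathscr{H}}$-size is controlled by $\delta$ and the moduli of continuity of $f$ and $f^{-1}$, supported near $q := f^{N-1}(p)$, replaces $f(q)$ by $p$, producing $f_1 \in Hom(M)$ with $f_1^N(p) = p$. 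Second, an arbitrarily small perturbation supported in a chart around $p$ modifies $f_1$ into $g$ so that $g^N$ is a local topological contraction at $p$, turning $\{p, g(p), \ldots, g^{N-1}(p)\}$ into an attracting periodic orbit for $g$. Choosing $\delta$ small enough forces $d_{\mathscr{H}}(f, g) < \varepsilon$. The attracting periodic orbit provides a proper attractor for $g^N$, so $\mathcal{CR}(g^N) \ne M$; combined with the standard identity $\mathcal{CR}(g^N) = \mathcal{CR}(g)$ for homeomorphisms on a compact metric space, this gives $g \in \mathcal{O}$.

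The main obstacle will be the explicit implementation of the two local perturbations: to preserve the homeomorphism property one must perform simultaneous small modifications on both the source and target sides of each affected chart, and the $d_{\mathscr{H}}$-distance (which involves $g^{-1}$ as well as $g$) has to be kept under control. Once these bookkeeping details are handled, $\mathcal{O}$ is open and dense in $Hom(M)$, so $\mathcal{R}$ is residual, which is the claim.
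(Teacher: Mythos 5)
Your argument is correct in outline, but it takes a genuinely different route from the paper. The paper disposes of the proposition in two lines by citing Hurley's theorem \cite{HuAttr} that $int(\mathcal{CR}(f))=\emptyset$ is generic in $(Hom(M),d_{\mathscr{H}})$, which immediately gives that $\mathcal{GR}(f)\subseteq\mathcal{CR}(f)\neq M$ holds residually, and then invokes Theorem \ref{generalized} exactly as you do. You instead prove directly that $\mathcal{O}=\{f:\mathcal{CR}(f)\neq M\}$ is open and dense. Your openness step is clean and is essentially the paper's own machinery: Theorem \ref{teo 1} (no $\mathcal{CR}$-explosions) applied to the neighborhood $M\setminus\overline{W}$, together with the observation that $d_{\mathscr{C}^0}\leq d_{\mathscr{H}}$ so that $d_{\mathscr{C}^0}$-open sets are $d_{\mathscr{H}}$-open. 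Your density step is where the real work sits, and it is the part the paper outsources to Hurley: closing a nearly recurrent orbit (note that taking $N$ minimal does force $p,f(p),\dots,f^{N-1}(p)$ to be pairwise distinct, since a coincidence $f^i(p)=f^j(p)$ would make $p$ periodic of period $j-i<N$, contradicting minimality) and then inserting a local contraction to create an attractor block $W$ with $g^N(\overline{W})\subsetneq W$, whence $\mathcal{CR}(g^N)\neq M$ by \cite{BF85}[Theorem A]-type reasoning, and $\mathcal{CR}(g)=\mathcal{CR}(g^N)$ by the standard power invariance of chain recurrence (alternatively, one can avoid that identity by taking the union of small trapping neighborhoods along the whole periodic orbit, which is an attractor block for $g$ itself). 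These perturbations are standard but not free: the support of the closing perturbation near $p$ has radius comparable to $\delta$ and must avoid the intermediate iterates $f(p),\dots,f^{N-1}(p)$, which is exactly what minimality of $N$ guarantees, and the $d_{\mathscr{H}}$-size is controlled via the modulus of continuity of $f^{-1}$ as you indicate. What your approach buys is a self-contained, elementary proof that does not rely on \cite{HuAttr} (whose own proof is a more refined version of the same perturbation idea); what it costs is that the "bookkeeping" you defer is the entire substance of the density claim, so as written the proposal is a correct plan rather than a complete proof. (Both arguments tacitly assume $\dim M\geq 1$; on a finite discrete $M$ every homeomorphism is periodic and the statement is vacuously false, but this degenerate case is outside the intended scope of the proposition.)
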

\begin{proof}
By Theorem 3.1 in \cite{HuAttr}, the property in $Hom(M)$ of having $int(\mathcal{CR}(f)) = \emptyset$ is generic and so also that of having $\mathcal{GR}(f)\neq M$. Therefore, by Theorem \ref{generalized}, the property in $Hom(M)$ of admitting a continuous strict Lyapunov function is generic too.
\end{proof}}

%%% %%%

\appendix

%%% %%% 
\section{Proof of Theorem \ref{teo 3}} \label{APP1}
Arguing by contradiction, we suppose that there are an open neighborhood $U$ of ${\mathcal{GR}}(f)$ in $M$, a sequence $(h_n)_{n\in\N} \in Hom(M)$ converging to $ f$ in the uniform topology and a sequence of points $(a_n)_{n\in\N} \in {\mathcal{GR}}(h_n) \setminus U$.  Since $X$ is compact and $U$ is open, we can assume that $ a_n \to b_1\notin \mathcal{GR}(f)$. By assumption
$$a_n \in \mathcal{GR}(h_n) \subseteq \mathcal{CR}(h_n)$$ and therefore --by the $\mathscr{C}^0$ closing lemma-- for each $n \in \mathbb{N}$ there exists $g_n \in Hom(M)$ such that
$$d_{\mathscr{C}^0} (h_n,g_n) < \frac{1}{n} \qquad \text{and} \qquad a_n \in Per(g_n).$$
%We notice that also the sequence $(g_n)_{n\in\N} \in Hom(M)$ converges to $ f$ in the uniform topology. 
Denote by $T_n \ge 1$ the least period of $a_n$ %, that is the smallest positive integer such that
%$$g_n^{T_n}(a_n) = a_n.$$
%Moreover, let
and define
$$k_n := n \, T_n.$$
Clearly, $k_n \rightarrow +\infty$ and
$$g_n^{k_n}(a_n) = a_n \qquad \forall n \in \mathbb{N}.$$
This means that there exist $g_n \to f$ in the $\mathscr{C}^0$ topology and $k_n \to +\infty$ such that every point $a_n$ can be equivalently represented by $g_n^{k_n}(a_n)$. Recall that
$$g_n^{k_n}(a_n) = a_n \to b_1 \notin \mathcal{GR}(f).$$
\indent The alpha limit and the omega limit of every point of $M$ are contained in $\mathcal{NW}(f) \subseteq \mathcal{GR}(f)$  (see Proposition 3.3.4 in \cite{K}). 
 In particular, $\alpha(b_1),  \omega(b_1) \subseteq L_1 \cup \ldots \cup L_k$ and therefore, by Theorem 5.4 in \cite{WEN}, 
 there exist $i_0, i_1 \in \{1, \ldots, k\}$ such that $\alpha(b_1) \subseteq L_{i_0}$ and $\omega(b_1) \subseteq L_{i_1}$ (equivalently $b_1 \in W^{u}(L_{i_0}) \cap W^{s}(L_{i_1})$). Since we have no cycles $i_0 \ne i_1$. \\
\indent Take a compact neighborhood $U_{i_1}$ of $L_{i_1}$ such that
$$b_1 \notin U_{i_1} \qquad \text{and} \qquad U_{i_1} \cap L_j = \emptyset \qquad \forall j \ne {i_1}.$$
Equivalently, since the $L_j$ are invariant,
$$b_1 \notin U_{i_1} \qquad \text{and} \qquad f(U_{i_1}) \cap L_j = \emptyset \qquad \forall j \ne {i_1}.$$
As in \cite{WEN}[Page 147], we denote pieces of $g_n$-orbits from $a_n$ to itself as:
$$[a_n,a_n] := (a_n, g_n(a_n), g_n^2(a_n), \dots , g_n^{k_n}(a_n)),$$
$$[a_n,a_n) := (a_n, g_n(a_n), g_n^2(a_n), \dots , g_n^{k_n-1}(a_n)),$$
$$(a_n,a_n) := (g_n(a_n), g_n^2(a_n), \dots , g_n^{k_n-1}(a_n)).$$	
Since $a_n \to b_1 \in W^s(L_{i_1})$, $g_n \to f$ in the uniform topology and $k_n \to +\infty$, there exists a sequence of points $p_n \in [a_n, a_n]$ such that
$$d(p_n, L_{i_1}) \to 0,\qquad\text{for } n \to +\infty.$$
In particular, since $b_1 \notin U_{i_1}$, $p_n \in (a_n,a_n)$ for large $n$. Moreover, for the same reason, for $n$ sufficiently large, there is $m_n \ge 1$ such that
$$p_n, g_n(p_n), \ldots ,g_n^{m_n - 1}(p_n) \in int \ U_{i_1}$$
and
$$g_n^{m_n}(p_n) \notin int \ U_{i_1}.$$
Then
$$z_n := g_n^{m_n}(p_n) \in g_n(int \ U_{i_1}) \setminus int \ U_{i_1} \subset g_n(U_{i_1}) \setminus int \ U_{i_1}.$$
Note that, since $d(p_n, L_{i_1}) \to 0$, $g_n \to f$ in the $\mathscr{C}^0$ topology and $L_{i_1}$ is $f$-invariant, we have
$$m_n \to +\infty.$$
\noindent Finally, since $g_n \to f$ in the uniform norm, we have that for $n$ large enough
$$g_n(U_{i_1}) \cap L_j = \emptyset \qquad \forall j \ne i_{1}.$$
Let $b_2$ be a limit point of the sequence $(z_n)_{n\in\N}$. Hence $b_2 \notin \mathcal{GR}(f)$. Moreover, since the first $m_n$ iterates of $z_n$ with respect to $g_n^{-1}$ are contained in $U_{i_1}$, $g_n \to f$ in the uniform norm and $m_n \to +\infty$, it follows that $f^{-m}(b_2) \in U_{i_1}$ for all $m \ge 1$ (recall that, since $X$ is compact, it holds also that $g_n^{-1}\rightarrow f^{-1}$ in the $\mathscr{C}^0$ topology). This means that $\alpha(b_2) \subseteq L_{i_1}$ or equivalently $b_2 \in W^{u}(L_{i_1})$. Moreover, from the hypothesis that there are no cycles, $\omega(b_2) \subseteq L_{i_2}$ with $i_2 \ne i_0, i_1$. \\
\indent Recall now that $z_n \in (a_n,a_n]$. This means that there exists $r_n \ge 1$ such that $z_n$ (which is the first point of the $g_n$-orbit of $p_n$ outside $int \ U_{i_1}$) can be represented as
$$z_n = g_n^{r_n}(a_n).$$ In order to proceed similarly with $b_2 \notin \mathcal{GR}(f)$, we need to prove that also the sequence 
$$k_n - r_n \to +\infty.$$ 
Suppose to the contrary that $k_n - r_n$ is uniformly bounded. This means that
$$b_1 = \lim_{n \to +\infty} g_n^{k_n - r_n}(z_n)$$
has the same alpha limit of $b_2 = \lim_{n \to +\infty} z_n$, that is $\alpha(b_1) \subseteq L_{i_1}$. Since $\alpha(b_1) \subseteq L_{i_0}$ with $i_1 \ne i_0$, this is the desired contradiction. In particular, $z_n \in (a_n,a_n)$ for large $n$. \\
\indent We now apply the same argument to $b_2 \notin \mathcal{GR}(f)$. Take a compact neighborhood $U_{i_2}$ of $L_{i_2}$ such that 
%\marginpar{\tiny{\color{red}{This part can be resumed... I have left it long in order to have all the details.}}}
%$$b_2 \notin U_{i_2} \qquad \text{and} \qquad U_{i_2} \cap L_j = \emptyset \qquad \forall j \ne {i_2}.$$
%Equivalently, since the $L_j$ are invariant,
$$b_2 \notin U_{i_2} \qquad \text{and} \qquad f(U_{i_2}) \cap L_j = \emptyset \qquad \forall j \ne {i_2}.$$
In order to continue, we denote pieces of $g_n$-orbits from $z_n = g_n^{r_n}(a_n)$ to $a_n$ as:
$$[z_n,a_n] := (z_n, g_n(z_n), g_n^2(z_n), \dots , g_n^{k_n-r_n}(z_n)),$$
$$[z_n,a_n) := (z_n, g_n(z_n), g_n^2(z_n), \dots , g_n^{k_n -r_n -1}(z_n)),$$
$$(z_n,a_n) := (g_n(z_n), g_n^2(z_n), \dots , g_n^{k_n-r_n-1}(z_n)).$$	
As in the previous case, %Since $z_n \to b_2 \in W^s(L_{i_2})$, $g_n \to f$ in the uniform topology and $k_n -r_n \to +\infty$, 
there exists a sequence of points $p'_n \in [z_n, a_n]$ such that
$$d(p'_n, L_{i_2}) \to 0,\qquad\text{for } n \to +\infty.$$
In particular, since $b_2 \notin U_{i_2}$, $p'_n \in (z_n,a_n)$ for large $n$. Moreover, %for the same reason, 
for $n$ sufficiently large, there is $m'_n \ge 1$ such that
$$p'_n, g_n(p'_n), \ldots ,g_n^{m'_n - 1}(p'_n) \in int \ U_{i_2}$$
and
$$g_n^{m'_n}(p'_n) \notin int \ U_{i_2}.$$
Then
$$z'_n := g_n^{m'_n}(p'_n) \in g_n(int \ U_{i_2}) \setminus int \ U_{i_2} \subset g_n(U_{i_2}) \setminus int \ U_{i_2}$$
%Note that, since $d(p'_n, L_{i_2}) \to 0$, $g_n \to f$ in the $\mathscr{C}^0$ topology and $L_{i_2}$ is $f$-invariant, we have
and
$$m'_n \to +\infty.$$
\noindent Finally, since $g_n \to f$ in the uniform norm, we have that for $n$ large enough
$$g_n(U_{i_2}) \cap L_j = \emptyset \qquad \forall j \ne {i_2}.$$
Let $b_3$ be a limit point of the sequence $(z'_n)_{n\in\N}$. Hence, $b_3 \notin \mathcal{GR}(f)$%. Since the first $m'_n$ iterates of $z'_n$ with respect to $g_n^{-1}$ are contained in $U_{i_2}$, $g_n \to f$ in the uniform norm and $m'_n \to +\infty$, it follows that $f^{-m}(b_3) \in U_{i_2}$ for all $m \ge 1$. This means that 
and
$\alpha(b_3) \subseteq L_{i_2}$ 
%or equivalently $b_3 \in W^{u}(L_{i_2})$. 
Moreover, since there are no cycles, $\omega(b_3) \subseteq L_{i_3}$ with $i_3 \ne i_0, i_1,i_2$. \\
\indent We finally notice that there exists $r'_n \ge 1$ such that $z'_n$ can be represented as
$$z'_n = g_n^{r'_n}(a_n).$$
Arguing as for $k_n - r_n$, it can be shown that the sequence
%In order to proceed with $b_3$ similarly, we need to prove that also the sequence 
$k_n - r'_n \to +\infty.$
%Suppose to the contrary that $k_n - r'_n$ is uniformly bounded. This means that
%$$b_1 = \lim_{n \to +\infty} g_n^{k_n - r'_n}(z'_n)$$
%has the same alpha limit of $b_3 = \lim_{n \to +\infty} z'_n$, that is $\alpha(b_1) \subseteq L_{i_2}$. Since $\alpha(b_1) \subseteq L_{i_0}$ with $i_2 \ne i_0$, this is the desired contradiction. 
In particular, $z'_n \in (z_n,a_n)$ for large $n$. \\ 
\indent We can proceed with $b_3$ exactly as $b_2$ and produce, by iteration, a chain of $L_j$'s with no repetitions and length greater that $k$. This is the desired contradiction and the theorem is proved. \hfill $\Box$

\section{Proof of Theorem \ref{altro verso con aperti}} \label{APP}

\noindent We begin with a definition and a technical lemma. 
\begin{definizione}  \label{def GR} Let $f \in Hom(X)$, $y,z \in X$ and $\varepsilon > 0$. A ${\mathcal{GR}}$-chain of $\varepsilon$ balls of $f$ from $y$ to $z$ is a finite sequence $\{B_i\} = B_1, \ldots , B_n$ of open convex balls such that
\begin{itemize}
\item[$(i)$] $y \in B_1$ and $z \in B_n;$
\item[$(ii)$] For any $i \in \{1, \ldots , n\}$, $diam(B_{i}) < \varepsilon$ and $B_{i} \cap {\mathcal{GR}}(f) \ne \emptyset;$
\item[$(iii)$] For any $i \in \{1, \ldots , n-1\}$, there exists $m(i) \ge 0$  such that $f^{m(i)}(B_i) \cap B_{i+1} \ne \emptyset$. 
\end{itemize}
\end{definizione}
\noindent The same definition of chain of balls is given in \cite{SS}[Page 590] for the non-wandering set. Moreover, Lemma \ref{tecnico} below is a generalization of Lemma 3 in \cite{SS}.

\begin{lemma} \label{tecnico} Let $f \in Hom(M)$ be defined on a compact topological manifold $M$ of $dim(M) \ge 2$. Let $y,z \in M$ and let $\varepsilon > 0$. Given a {$\mathcal{GR}$}-chain of $\varepsilon$ balls of $f$ from $y$ to $z$, there exists $g \in Hom(M)$ such that:
\begin{itemize}
\item[$(a)$] $d_{\mathscr{C}^0}(f,g) < 4\pi \varepsilon;$
\item[$(b)$] $g^N(f^{-1}(y)) = f(z)$ for some $N > 0;$
\item[$(c)$] $g=f$ outside the union of the $\varepsilon$ balls of the $\mathcal{GR}$-chain. 
\end{itemize}
\end{lemma}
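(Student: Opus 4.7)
The plan is to construct $g$ as a perturbation of $f$ supported in $\bigcup_i B_i$, designed so that the forward $g$-orbit of $f^{-1}(y)$ is routed through the balls in order and terminates at $f(z)$ in finitely many iterations. First I would fix the ``waypoints'': set $v_1 := y$, $w_n := z$, and for each $1 \le i \le n-1$ use condition (iii) to pick $w_i \in B_i$ with $v_{i+1} := f^{m(i)}(w_i) \in B_{i+1}$, taking each $m(i)$ minimal. The $g$-orbit I aim to realize is
$$f^{-1}(y) \to v_1 \to w_1 \to f(w_1) \to \cdots \to v_2 \to w_2 \to \cdots \to v_n \to f(z).$$
Using $\dim M \ge 2$ and the convexity of each $B_i$, I construct for every $i$ a homeomorphism $\phi_i$ of $M$ compactly supported in $B_i$ with $\phi_i(v_i) = w_i$. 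Since $\dim M \ge 2$, $\phi_i$ can be further arranged to be the identity on any prescribed finite subset of $B_i \setminus \{v_i, w_i\}$, which allows me to impose that $\phi_i$ fix each intermediate iterate $f^j(w_{i'})$ with $i'\ne i$ and $1 \le j < m(i')$ that happens to lie in $B_i$. As the balls have pairwise disjoint closures, the composition $\phi := \phi_n \circ \cdots \circ \phi_1$ is well defined, agrees with $\phi_i$ on each $B_i$, and is the identity outside $\bigcup_i B_i$. I then set $g := f \circ \phi$.

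Conditions (c) and (b) follow easily from this setup. For (c): $\phi = \mathrm{id}$ outside $\bigcup_i B_i$, so $g = f$ there. For (b): arranging by a slight shrinking of the balls (if necessary) that $f^{-1}(y) \notin \bigcup_i B_i$, one has $g(f^{-1}(y)) = f(f^{-1}(y)) = v_1$; then $g(v_1) = f(\phi(v_1)) = f(w_1)$, and at every intermediate iterate $f^j(w_i)$ the map $\phi$ acts as the identity by construction, so $g$ coincides with $f$ at those points. Inductively, after $1 + \sum_{i=1}^{k-1} m(i)$ applications of $g$ the orbit reaches $v_k$; after $1 + \sum_{i=1}^{n-1} m(i)$ it reaches $v_n$; one further application yields $g(v_n) = f(\phi(v_n)) = f(w_n) = f(z)$. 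Hence (b) holds with $N := 2 + \sum_{i=1}^{n-1} m(i)$.

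The main obstacle is the $\mathscr{C}^0$-bound (a). A direct estimate on $g = f \circ \phi$ gives only $d(g(x), f(x)) \le \omega_f(\varepsilon)$, the modulus of continuity of $f$ at scale $\varepsilon$, which is not controlled by a linear function of $\varepsilon$. To obtain $d_{\mathscr{C}^0}(f, g) < 4\pi\varepsilon$, I would follow Shub--Smale \cite{SS}: rather than realizing the displacement $v_i \mapsto w_i$ by a single ambient isotopy of $B_i$, one interpolates via a sequence of smaller isotopies along the $f$-orbit segment of $w_i$, each supported in a sub-ball of diameter $< \varepsilon$, so that their amplitudes combine additively in the \emph{target} rather than through the modulus of continuity of $f$. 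The constant $4\pi$ absorbs the worst-case winding needed to thread these isotopies around the existing $f$-orbit while keeping $\partial B_i$ fixed; the hypothesis $\dim M \ge 2$ is essential for such winding motions to exist.
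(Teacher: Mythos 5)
Your construction of the routing orbit and your verification of (b) and (c) are reasonable, but there is a genuine gap at the crux of the lemma, namely the bound (a), and the repair you sketch does not close it. With $g=f\circ\phi$ and $\phi$ supported in the balls, you correctly observe that $d(g(x),f(x))=d(f(\phi(x)),f(x))$ is controlled only by the modulus of continuity $\omega_f(\varepsilon)$; however, replacing the single isotopy realizing $v_i\mapsto w_i$ by a string of smaller isotopies supported in sub-balls does not change this, because every one of these perturbations is still applied \emph{before} $f$: whatever displacement you create in the source is fed through $f$ before being compared with $f(x)$, so the error is still $\omega_f(\cdot)$ and not a multiple of $\varepsilon$. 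The phrase ``amplitudes combine additively in the target'' has no justification for a source-side perturbation, and ``the constant $4\pi$ absorbs the worst-case winding'' is an assertion, not an argument. The mechanism that actually yields (a) --- and the one the paper uses --- is to perturb on the \emph{target} side: set $g=\eta\circ f$, where $\eta$ is required to send each point $f(\cdot)$ of the routed orbit to the next prescribed waypoint; then $d_{\mathscr{C}^0}(g,f)=\max_x d\bigl(\eta(f(x)),f(x)\bigr)$ is bounded by the displacement of $\eta$ alone, and each prescribed displacement is small ($<2\varepsilon$ in the paper) because the relevant pairs of points lie in a common ball of diameter $<\varepsilon$ or are consecutive points of an $\varepsilon$-chain. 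Realizing all of these finitely many displacements \emph{simultaneously} by one homeomorphism with $d_{\mathscr{C}^0}(\eta\circ f,f)<4\pi\varepsilon$ is precisely Lemma 13 of \cite{NS} (this is also where $\dim M\ge 2$ is genuinely used, after perturbing so that the sources, respectively the targets, are pairwise distinct). Your proposal never invokes such a simultaneous-displacement lemma, and without it conclusion (a) is unproved.

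Two further remarks. First, the paper's proof differs from your route in a secondary way: it threads the orbit through $\varepsilon$-chains based at the points $k_i\in B_i\cap\mathcal{GR}(f)\subseteq\mathcal{CR}(f)$ (pairs $(f(r_{i,j}),r_{i,j+1})$), whereas your itinerary goes directly from ball to ball along true $f$-orbit segments; that difference is not where your argument fails, but it shows the intended style of proof: list the finitely many pairs $(q,p)$ with $d(q,p)<2\varepsilon$ that the post-composition $\eta$ must realize, and quote \cite{NS}. Second, you cannot ``slightly shrink the balls'' to arrange $f^{-1}(y)\notin\bigcup_i B_i$: the balls are given data of the lemma and conditions $(i)$--$(iii)$ of Definition \ref{def GR} must be preserved; either add $f^{-1}(y)$ (and all intermediate orbit points and waypoints, after a small adjustment making them distinct) to the finite set your perturbation is required to fix, or note that with the target-side construction $g(f^{-1}(y))=\eta\bigl(f(f^{-1}(y))\bigr)=\eta(y)$ requires no assumption on the location of $f^{-1}(y)$.
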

\noindent \textit{Proof of Lemma \ref{tecnico}.} Let $\{B_i\} = B_1, \ldots , B_n$ be a $\mathcal{GR}$-chain of $\varepsilon$ balls from $y$ to $z$. For any $i \in \{1, \ldots , n-1\}$, let $m(i) \ge 0$ be the first integer such that $f^{m(i)}(B_i)\cap B_{i+1}\neq\emptyset$. For any $i \in \{1, \ldots , n-1\}$, choose 
$$w_{i+1} \in f^{m(i)}(B_i)\cap B_{i+1}$$ 
and denote 
$$z_i := f^{-m(i)}(w_{i+1}) \in B_i$$
\noindent Moreover, since $\{B_i\}$ is a ${\mathcal{GR}}$-chain, for any $i \in \{1,\dots,n\}$ we can select 
$$k_i \in B_i\cap {\mathcal{GR}}(f).$$ In particular, each $k_i \in \mathcal{CR}(f)$.
Consequently, let $(r_{i,1}=k_i,r_{i,2},\dots,r_{i,N(k_i)} = k_i)$ be a $\varepsilon$-chain from $r_{i,1}=k_i$ to itself. \\
\indent Let now consider the following couples of points:  
\begin{flalign*}
&& (y , k_1)  && 
\end{flalign*}
\begin{flalign*}
&&  (f(r_{1,j}) , r_{1,j+1}) && \mathllap{\forall j=1,\dots,N(k_1)-2}
\end{flalign*}
\begin{flalign*}
&& (f(r_{1,N(k_1)-1}),z_1)  && 
\end{flalign*}
\begin{flalign*}
&& (f^j(z_1),f^j(z_1)) && \mathllap{\forall j=1,\dots,m(1)-1}
\end{flalign*}
\begin{flalign*}
&& (f^{m(1)}(z_1),k_2) = (w_2,k_2) &&
\end{flalign*}
\begin{flalign*}
&& (f(r_{2,j}),r_{2,j+1}) &&\mathllap{\forall j=1,\dots,N(k_2)-2}
\end{flalign*}
\begin{flalign*}
&& (f(r_{2,N(k_2)-1}),z_2) &&
\end{flalign*}
\begin{flalign*}
&& (f^j(z_2),f^j(z_2)) &&\mathllap{\forall j=1,\dots,m(2)-1}
\end{flalign*}
\noindent and so on, until
\begin{flalign*}
&& (f^{m(n-1)}(z_{n-1}), z) = (w_n,z) &&
\end{flalign*}
\begin{flalign*}
&& (f(z),f(z)). &&
\end{flalign*}

By perturbing $f$ a little, we can assume that for every $(q_i,p_i)$, $(q_j,p_j)$ with $i \ne j$, it holds that $q_i \ne q_j$ and $p_i \ne p_j$. Moreover, by construction, for any couple $(q,p)$ given above, the distance $d(q,p)<2\epsilon$. Consequently, applying Lemma 13 in \cite{NS}, we obtain a homeomorphism $\eta:M\rightarrow M$ such that 
$$d_{{\mathscr{C}}^0}(\eta\circ f,f)<4 \pi \varepsilon \qquad \text{and} \qquad \eta(q)=p$$
for any such a couple $(q,p)$. \\
\indent We finally prove that there exists $N>0$ such that $(\eta\circ f)^N(f^{-1}(y))=f(z)$. Indeed
$$
(\eta\circ f) (f^{-1}(y))=k_1,\qquad (\eta\circ f) (k_1)=r_{1,2}, \qquad (\eta \circ f)(r_{1,2})=r_{1,3}, \quad \dots \quad (\eta \circ f)(r_{1,N(k_1)-1})=z_1
$$
$$
(\eta\circ f)(z_1)=f(z_1), \qquad (\eta \circ f) (f(z_1)) = f^2(z_1), \quad\dots\quad (\eta\circ f) (f^{m(1)-1}(z_1))=k_2
$$
and so on, until 
$$
(\eta\circ f)(f^{m(n-1)-1}(z_{n-1}))=\eta(w_n)=z, \qquad \eta\circ f(z)=f(z).
$$
\noindent This proves that, after a number $N>0$ of iterations of $\eta\circ f$, we have
$$
(\eta\circ f)^N(f^{-1}(y))=f(z)
$$
and so $g:=\eta\circ f$ is the desired perturbation.
\hfill\qed \\
~\newline
\noindent We finally prove Theorem \ref{altro verso con aperti} by using the same techniques of Lemma 2 in \cite{SS}. \\
~\newline
\noindent \textit{Proof of Theorem \ref{altro verso con aperti}.} Let $U$ be an arbitrary open neighborhood of ${\mathcal{GR}}(f)$ in $M$. Since $f$ does not permit ${\mathcal{GR}}$-explosions, there exists $\varepsilon>0$ such that if the ${\mathscr{C}}^0$ distance from $g\in Hom(M)$ to $f$ is less than $4 \pi\varepsilon$ then 
	\begin{equation}\label{relation f g U}
	{\mathcal{GR}}(g)\subset U.
	\end{equation}
\indent Let $\{B_{\alpha}\}$ be a finite covering of ${\mathcal{GR}}(f)$ by open convex balls such that, for any $\alpha$, 
$$B_{\alpha} \subseteq U \quad \text{ and } \quad diam(B_{\alpha}) < \varepsilon.$$
    We denote by $\{U_{\beta}\}$ the connected components of $\bigcup_{\alpha} B_{\alpha}$. By shrinking the balls $\{B_{\alpha}\}$ if necessary, we can assume that the $\{U_{\beta}\}$ have pairwise disjoint closures. We now introduce the following equivalence relation on pairs of $\{U_{\beta}\}$. We say that $U_{\beta_1}$ is related to $U_{\beta_2}$ if either $U_{\beta_1} = U_{\beta_2}$ or there is a common cycle containing both $U_{\beta_1}$ and $U_{\beta_2}$, according to Definition \ref{definizione cicli aperti}. Moreover, we indicate by $\tilde{U}_{i}$  the union of the members of the same relation class. By construction, $\{\tilde{U}_i\}$ is an open decomposition of $f$ with no $r$ cycles for $r>1$. In order to obtain an open decomposition of $f$ with no $r$ cycles for $r\geq 1$, we define
$$\tilde{U}_i^s := \{x \in X : \ \exists  \ q\ge 0 \text{ such that }f^{q}(x) \in \tilde{U}_i \}$$
and (see also formula (\ref{instabile}))
$$V_i := \tilde{U}_i^s \cap \tilde{U}_i^u.$$
Then $V_1, \ldots , V_k$ is an open decomposition of $f$ with no cycles. That is, $\{V_i\}$ is a finite family of open sets with pairwise disjoint closures such that 
${\mathcal{GR}}(f) \subseteq \bigcup_{i = 1}^k V_i$
and no subset of $\{V_i\}$ forms an $r \ge 1$ cycle. \\
\indent It remains to prove that $V_i \subseteq U$ for any $i$. Let $i$ be fixed. Arguing by contradiction, we suppose there exists 
$$x \in V_i \setminus U.$$
Since $V_i = \tilde{U}_i^s \cap \tilde{U}_i^u$, there exist $m,q > 0$ such that 
$$y := f^m(x) \in \tilde{U}_i \quad \text{ and } \quad z:= f^{-q}(x) \in \tilde{U}_i$$
Let $m,q > 0$ be the minimal integers with this property. Recall that, by definition, $\tilde{U}_i$ is the union of the members of $\{B_{\alpha}\}$ in the same relation class. This means that there exists a ${\mathcal{GR}}$-chain of $\varepsilon$ balls of $f$ from $y$ to $z$ and therefore we can apply Lemma \ref{tecnico}.  Let $g \in Hom(M)$ be the homeomorphism given by Lemma \ref{tecnico}. Since $m,q > 0$ are the minimal integers such that $y = f^m(x) \in \tilde{U}_i \text{ and } z= f^{-q}(x) \in \tilde{U}_i$, the points
$$f^i(x) \qquad \forall i \in \{0, \ldots , m-1\}$$
and
$$f^{-j}(x)  \qquad \forall j \in \{0, \ldots q-1\}$$
are outside the union of the $\varepsilon$ balls of the ${\mathcal{GR}}$-chain. Consequently, by point $(iii)$ of Lemma \ref{tecnico}, 
\begin{equation}\label{prima}
g(f^i(x)) = f(f^i(x)) \qquad \forall i \in \{0, \ldots , m-1\}
\end{equation}
and
$$g(f^{-j}(x)) = f(f^{-j}(x))  \qquad \forall j \in \{0, \ldots q-1\}.$$
Since $x = f^{-m}(y) = f^q(z)$, the previous equalities become respectively
$$
g(f^{-i}(y)) = f(f^{-i}(y)) \qquad \forall i \in \{1, \ldots , m\}
$$
and
\begin{equation} \label{seconda}
g(f^{j}(z)) = f(f^{i}(z))  \qquad \forall j \in \{1, \ldots q\}.
\end{equation}
Consequently, by (\ref{seconda}):
\begin{eqnarray*}
x = f^q(z) = f(f^{q-1}(z)) = g(f^{q-1}(z)) = g^2(f^{q-2}(z)) = \ldots = g^{q-1}(f(z)).
\end{eqnarray*}
Moreover, by point $(ii)$ of Lemma \ref{tecnico}, $g^N(f^{-1}(y))= f(z)$ for some $N > 0$ and therefore (see also (\ref{prima})) 
\begin{eqnarray*}
x &=& g^{N + q -1}(f^{-1}(y)) = g^{N + q -1} (f^{m-1}(x)) =  g^{N + q -1} (f(f^{m-2}(x))) = g^{N + q}(f^{m-2}(x)) \\
&=& \ldots = g^{N+q+m-2}(x).
\end{eqnarray*}
This means that $x \in Per(g)$. \\
Recall that, by property $(i)$ of Lemma \ref{tecnico}, $g \in Hom(M)$ is such that $d_{\mathscr{C}^0}(f,g) < 4 \pi \varepsilon$. Consequently, by \eqref{relation f g U}, ${\mathcal{GR}}(g) \subset U$. Since $Per(g)\subseteq {\mathcal{GR}}(g)$, the point $x$ should belong to $U$ and this gives us the required contradiction.
\hfill  $\Box$ \\

\end{document}